\theoremstyle{plain}
\newtheorem{theorem}{Theorem}[section]
\newtheorem{proposition}[theorem]{Proposition}
\newtheorem{definition}[theorem]{Definition}
\newtheorem{lemma}[theorem]{Lemma}
\newtheorem{remark}[theorem]{Remark}
\newtheorem{example}[theorem]{Example}
\title[Classification of doubly periodic untwisted \\ $(p,q)$-weaves by their crossing number]
  {Classification of doubly periodic untwisted \\ $(p,q)$-weaves by their crossing number}
\author[Mizuki Fukuda, Motoko Kotani, Sonia Mahmoudi]{Mizuki Fukuda, Motoko Kotani, Sonia Mahmoudi}
\address{Advanced Institute for Materials Research, Tohoku University, 2-1-1 Katahira, Aoba-ku, Sendai, 980-8577, Japan}
\email{sonia.mahmoudi.mines@gmail.com}
\subjclass[2020]{57K10, 57K12 , 57M15 , 05A05} 
\keywords{weave, weaving diagram, crossing number, link in a thickened torus}
\date{February 3, 2022}
\thanks{This work is supported by a Research Fellowship from JST CREST Grant Number JPMJCR17J4.}
\begin{document}

\begin{abstract} 
A weave is the lift to the Euclidean thickened plane of a set of infinitely many planar crossed geodesics, 
that can be characterized by a number of sets of threads describing the organization of the non-intersecting curves, 
together with a set of crossing sequences representing the entanglements.
In this paper, the classification of a specific class of doubly periodic weaves, called untwisted $(p,q)$-weaves, is done by their crossing number, 
which is the minimum number of crossings that can possibly be found in a unit cell of its infinite weaving diagrams.
Such a diagram can be considered as a particular type of quadrivalent periodic planar graph with an over or under information at each vertex, 
whose unit cell corresponds to a link diagram in a thickened torus.
Moreover, considering that a weave is not uniquely defined by its sets of threads and its crossing sequences, 
we also specify the notion of equivalence classes by introducing a new parameter, called crossing matrix.
\end{abstract}

\maketitle

\section{Introduction}\label{sec:1}

% %%%% CONTEXT & BACKGROUND %%%%%
A \textit{weave} is an embedding of infinite intertwined \textit{threads} in a thickened Euclidean plane
which differs from general links mainly because it does not contain any closed components \cite{ref1}. 
Many well-known weaves are doubly periodic objects and it is therefore natural to approach these structures using knot theory, 
as done by S. Grishanov, H. Morton et al. in \cite{ref4, ref5, ref6, ref16}, as well as A. Kawauchi in \cite{ref13}. 
Such periodic weaves can indeed be fully described by a repeating unit cell, that we call a \textit{weaving motif}, 
which can simply be seen as a particular type of link diagram in a thickened torus, due to the factorization of its group of translation symmetries. 
They attempted to describe and classify doubly periodic structures with this strategy by extending for example the Kauffman bracket \cite{ref4} or the multi-variable Alexander polynomials \cite{ref16}, 
considering that two such structures are equivalent if they satisfy the Reidemeister Theorem in the torus \cite{ref4,ref13}. 
However such topological invariants do not necessarily distinguish a weave from other types of doubly periodic entangled structures whose component are also sets homeomorphic to $\mathbb{R}$, 
and which are often found and compared in materials science for their different physical properties. 
Besides, some classical knot invariants relevant for the classification of these structures cannot be computed, such as the \textit{crossing number} \cite{ref1}.
This invariant is defined as the minimum number of \textit{crossings} that can be possibly found in a weaving motif and is useful to characterize the complexity of a weave,
but the unit cells of a given doubly periodic structure can be chosen in many different ways, possibly with a different number of crossings. 

% %%%%  OBJECTIVE%%%%%
In this paper, we introduce a formal topological definition of a class of weaves, generalizing existing physical structures in materials science, such as textiles \cite{ref18}.
First, we consider a set of geodesics embedded in the Euclidean plane $\mathbb{E}^2$ belonging to $N \geq 2$ disjoint color groups. 
These lines are such that two geodesics in the same color group do not intersect each other, namely they are parallel, or are in different color groups if they intersect once (Fig.~\ref{fig1}(A)).
Then, each intersection is given an over or under information, called \textit{crossing} as in knot theory (Fig.~\ref{fig1}(B)), 
according to a set of \textit{crossing sequences} which consist in a sequence of integers with minimal length describing the number of consecutive over or undercrossings for each geodesic. 
Finally, we define an \textit{untwisted weave} as the lift of these crossed geodesics to the Euclidean thickened plane $\mathbb{X}^3 = \mathbb{E}^2 \times  \mathrm{I}$, where $\mathrm{I} = [-1,1]$, 
with respect to the crossing information and such that the lifted geodesics, called \textit{threads}, do not intersect in $\mathbb{X}^3$ (Fig.~\ref{fig1}(C)). 
We will see in our future work that we can define any weave as an untwisted weave, possibly with twists, where a twist is created by cutting and gluing two consecutive parallel threads to introduce new crossings. 

%%%%%%%%%%%%%%%%%%%%%%%%%%%%%% FIGURE 1 %%%%%%%%%%%%%%%%%%%%%%%%%%%%%%
\begin{figure}[h]
\centering
   \includegraphics[scale=1]{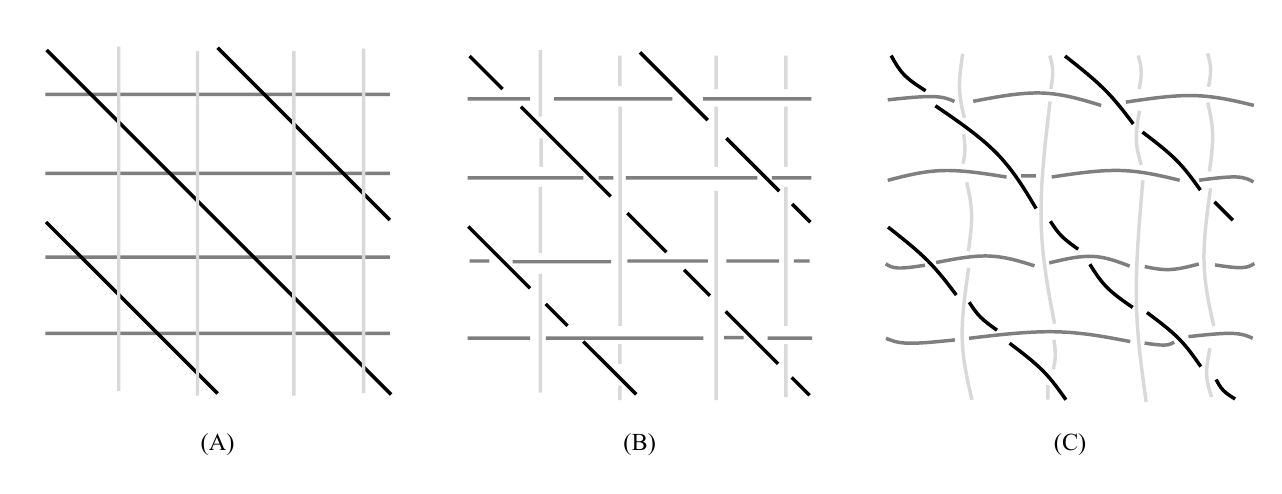}
      \caption{\label{fig1} 
(A) Sets of geodesics belonging to different color groups in the Euclidean plane. 
(B) Crossing information to each intersection in the Euclidean plane. 
(C) Untwisted weave in the thickened Euclidean plane.}
\end{figure}
%%%%%%%%%%%%%%%%%%%%%%%%%%%%%% FIGURE 1 %%%%%%%%%%%%%%%%%%%%%%%%%%%%%%

Our approach makes it possible to distinguish a weave from other periodic complex entangled structures made of one-dimensional objects, 
such as a \textit{knit} \cite{ref16}, or a \textit{braid} \cite{ref14}, which consists of a single set of threads. 
We do not consider the case of entangled connected nets either, described by S.T. Hyde et al. in \cite{ref19}, however their cases of weaving of filament components match our weaves here.
Next, we will study a particular class of doubly periodic untwisted weaves, called \textit{$(p,q)$-weaves}, and classify them according to their crossing number using combinatorial arguments, 
which make the originality of our results. An extension to the case of weaves admitting twists is under study.
Note that we will not consider the theory of virtual knots \cite{ref12}, as done by Kawauchi \cite{ref13}, for our classification here.   

\begin{definition}\label{def:1-1} 
Let $i$, $j$ and $k$ be strictly positive integers.
A $(p,q)$-weave $\mathrm{W}$ is defined such that all its crossing sequences, possibly distinct, can be described by two positive integers $p_k$ and $q_k$. 
This means that if $C_{i,j} = (+p_k,-q_k)$ is the crossing sequence associated to the disjoint sets of threads $T_i$ and $T_j$ of $\mathrm{W}$, 
each thread of $T_i$ is cyclically $p_k$ consecutive times over the threads of $T_j$, followed by $q_k$ consecutive times under. 
Moreover, the complementary crossing sequence of $C_{i,j}$ is given by $C_{j,i} = (+q_k,-p_k)$. 
\end{definition}

By definition of a doubly periodic untwisted $(p,q)$-weave $\mathrm{W}$ as a lift of planar geodesics, 
any of its weaving motif is a set of \textit{essential} simple closed curves in a torus 
that can be characterized by a pair of coprime integers of type $(a,b)$, representing the slopes of the straight lines in $\mathbb{E}^2$. 
Recall that a closed curve is called essential if it is not homotopic to a point, a puncture, or a boundary component \cite{ref3}. 
Therefore, such a weave with $N \geq 2$ \textit{sets of threads}, defined by the respective color groups, can be (re)construct 
from a lift to the universal cover and translational symmetries of a set $\Sigma'$ of such curves on a thickened torus, 
by choosing $N$ couples of distinct coprime integers, together with a set $\Sigma$ of \textit{crossing sequences} to assign an over or under information to each crossing.
Then, by combining the crossing sequence $C_{i,j}$ of each pair of curves $(a_i,b_i)$ and $(a_j,b_j)$, representing the sets of threads $T_i$ and $T_j$, 
with their associated \textit{geometric intersection number} $|v_{i,j}| = |a_i \, b_j - a_j \, b_i |$, 
we can deduce the \textit{$(i,j)$-pairwise crossing number} of each pair $(T_i,T_j)$, 
defined as the minimum number of crossings necessary to encode the periodicity of the infinite structure, 
both on simple closed curves representing $T_i$ and $T_j$. 
Finally, we will prove that the \textit{total crossing number} of $\mathrm{W}$ is given by our following main Theorem, proved in Section 3.2. 

\begin{theorem}\label{thm:1-2} \textbf{(Total Crossing Number)}
Let $i, j \in \{1, \cdots , N\}$ distinct and $\mathcal{C}_{i,j}$ be the $(i,j)$-pairwise crossing numbers of a weaving diagram 
$D_{W_0}$ with $N$ sets of threads, characterized by the pair $(\Sigma',\Sigma)$. 
Let $(\mathcal{S}_{min})$, be the system of geometric intersection number equations, defined for integers $a_i$ and $b_i$, 
either coprime or such that one of them equal zero,  
satisfying for each equation that we can multiply both parts by a same even number $k_l$ 
if the two sets of threads implied on the equation cross ($k_l = 1$ otherwise), 
with $l \in \{1, \cdots , \frac{N (N - 1)}{2}\}$ being the index of the equation in the system.\\

 $ (\mathcal{S}_{min}) \left \{
   \begin{array}{r c l}
      \ k_1 \times |a_1 \ b_2 - a_2 \ b_1|   & = & k_1 \times \mathcal{C}_{1,2} \\
      . \\
      . \\
      . \\
      k_l \times |a_i \ b_j - a_j \ b_i|   & = & k_l \times \mathcal{C}_{i,j} \\
      . \\
      . \\
      .
   \end{array}
   \right .$
   
 ~ \\

Then, the total crossing number of $\mathrm{W}$ is given by,
$$ \mathcal{C} = \sum_{i<j = 1}^{N} \mathcal{C'}_{i,j} , $$ 
such that $\mathcal{C'}_{i,j} = k_l \ \mathcal{C}_{i,j}$, where $k_{l}$ are the smallest multipliers that give a solution to $(\mathcal{S}_{min})$.\\
\end{theorem}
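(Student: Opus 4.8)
The plan is to prove the formula in two stages: first an additive decomposition of the total crossing number into pairwise contributions, and then a combinatorial analysis of how those pairwise contributions must be rescaled to coexist on a single weaving motif. The starting observation is that in any weaving diagram every crossing involves exactly two threads belonging to two \emph{distinct} sets $T_i$ and $T_j$, since threads of the same color group are parallel and never intersect. Consequently the crossings of a weaving motif partition into disjoint classes indexed by the unordered pairs $\{i,j\}$, and the total crossing number is the sum, over these $\frac{N(N-1)}{2}$ pairs, of the number of $(i,j)$-crossings in the chosen unit cell. This reduces the theorem to controlling, for each pair, the minimal number of crossings compatible with the global periodicity.

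First I would fix a pair $(T_i,T_j)$ and recall that, when $T_i$ and $T_j$ are realized by single essential curves of coprime slopes $(a_i,b_i)$ and $(a_j,b_j)$ on the torus, the number of their intersection points equals the geometric intersection number $|v_{i,j}| = |a_i\,b_j - a_j\,b_i|$. The crossing sequence $C_{i,j} = (+p_k,-q_k)$ forces this count to realize the prescribed over/under pattern cyclically, and matching it against the pairwise crossing number yields exactly the base equation $|a_i\,b_j - a_j\,b_i| = \mathcal{C}_{i,j}$, the $l$-th row of $(\mathcal{S}_{min})$ before any rescaling. The essential subtlety is that these equations, one per pair, share the variables $(a_i,b_i)$ across pairs, so the slopes realizing the pairwise minimum for one pair need not be compatible with those of another.

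Next I would introduce the multipliers. Enlarging the unit cell of the $l$-th pair by an integer factor $k_l$ (equivalently, taking $k_l$ parallel copies of the corresponding primitive curve) multiplies both the available intersection number and the number of crossings by $k_l$, which is precisely the operation recorded on the two sides of the $l$-th equation of $(\mathcal{S}_{min})$. The evenness requirement for crossing pairs I would derive from the untwisted condition together with the complementary relation $C_{j,i} = (+q_k,-p_k)$: closing up the pattern $(+p_k,-q_k)$ consistently around the torus, while keeping the lift free of self-intersections in $\mathbb{X}^3$, obstructs odd repetitions and forces $k_l$ to be even exactly when $T_i$ and $T_j$ cross, and $k_l=1$ otherwise. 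Proving that the simultaneous integrality and coprimality conditions on the slopes (or the degenerate cases $a_i=0$ or $b_i=0$) are equivalent to solvability of $(\mathcal{S}_{min})$ is the combinatorial heart of this step.

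Finally I would prove the two matching inequalities. For the lower bound, any weaving diagram $D_{W_0}$ realizing $(\Sigma',\Sigma)$ yields a solution of $(\mathcal{S}_{min})$ with some admissible multipliers, whence its crossing count is $\sum_{i<j} k_l\,\mathcal{C}_{i,j} \geq \sum_{i<j} \mathcal{C}'_{i,j}$ once the $k_l$ are taken minimal. For the upper bound I would construct an explicit diagram from a solution given by the smallest multipliers, verifying that its crossing sequences are the prescribed ones and that its crossing count meets the bound. I expect the main obstacle to be this last construction: showing that the smallest $k_l$ solving $(\mathcal{S}_{min})$ can always be realized \emph{simultaneously} by a single admissible choice of coprime slopes, so that the pairwise optima glue to a genuine doubly periodic untwisted weave rather than merely bounding each pair in isolation, and in particular checking that the parity constraints never force a multiplier larger than the one prescribed by the system.
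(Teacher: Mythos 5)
Your plan follows the same core route as the paper's proof: reduce to the pairwise crossing numbers of Lemma 3.12, realize each pair of thread sets by torus curves of slopes $(a_i,b_i)$, $(a_j,b_j)$ so that each pair contributes a geometric intersection number equation, observe that the slope variables are shared across pairs, and conclude that the total minimum is governed by simultaneous solvability of $(\mathcal{S}_{min})$. The difference is one of rigor rather than of route: the paper's printed proof is only a few sentences, asserting that the pairwise equations must hold simultaneously and that the minimal-absolute-value solution gives the total, with no matching lower/upper bound argument; the realization of a minimal solution as an actual diagram is deferred to Remark 3.14 (parallel copies of $(a_i,b_i)$-curves on a square unit cell) and Example 3.16, and the interaction with equivalence classes to Remark 3.15. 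Your explicit partition of crossings by unordered pairs (valid, since same-set threads never cross), your two-inequality structure, and your attempt to justify the parity constraint are all absent from the paper, so your plan, if completed, would be strictly stronger than the printed proof. Two cautions, though. First, your proposed derivation that the untwisted condition \emph{forces} $k_l$ to be even exactly when $T_i$ and $T_j$ cross contradicts the paper's own twill example, where the two sets cross and the minimal multiplier is $k=1$; the intended constraint is that a multiplier other than $1$ used for crossing sets must be even, not that evenness is mandatory. Relatedly, be careful with the bookkeeping of "taking $k_l$ parallel copies": copies taken in both sets multiply the intersection count quadratically, which is why the paper's Remark 3.14 distributes the factor (roughly $k_l/2$ copies per set) rather than multiplying one side by $k_l$. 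Second, the obstacle you flag at the end --- that the smallest admissible $k_l$ can be realized \emph{simultaneously} by a genuine doubly periodic untwisted weave with the prescribed crossing sequences --- is exactly the step the paper never proves either; moreover, by the paper's Remark 3.15 the answer depends on the equivalence class (crossing matrices) of the diagram, as the basket weave built from the same pair $(\Sigma',\Sigma)$ as the twill requires the next-smallest solution. So completing your final step would require an argument (absent from the paper) that the prescribed crossing matrices are realizable at the claimed minimum.
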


However the same pair $(\Sigma',\Sigma)$ can generate two weaving diagrams that do not belong to the same equivalence class. 
This motivates the introduction of a new parameter $\Pi$ which encodes the organization of the crossings in a weaving motif, called a \textit{crossing matrix},   
and we will prove our second main Theorem in Section 3.1.

\begin{theorem}\label{thm:1-3} \textbf{(Equivalence Classes of doubly-periodic untwisted $(p,q)$-weaves)} 
Let $\mathrm{W}_1$ and $\mathrm{W}_2$ be two doubly periodic untwisted $(p,q)$-weaves with $N \geq 2$ sets of threads,
such that their corresponding regular projections are equivalent, up to isotopy of $\mathbb{E}^2$, and with the same set of crossing sequences.
Let $D_{W_1}$ and $D_{W_2}$ be two weaving motifs of same area of $\mathrm{W}_1$ and $\mathrm{W}_2$, respectively.
Then, $D_{W_1}$ and $D_{W_2}$ are equivalent if and only if their crossing matrices are pairwise equivalent.
\end{theorem}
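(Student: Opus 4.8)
The plan is to prove the two implications of the equivalence separately, exploiting the fact that the two standing hypotheses---equivalent regular projections (up to isotopy of $\mathbb{E}^2$) and an identical set of crossing sequences---already determine everything about $D_{W_1}$ and $D_{W_2}$ except the precise spatial distribution of the over- and undercrossings inside a fundamental domain. Since the crossing matrix is by construction the bookkeeping device recording exactly this distribution, the theorem reduces to showing that $\Pi$ is a complete invariant of a weaving motif relative to the fixed projection and crossing data. I would first fix, for each weave, a weaving motif of the common area and read off its crossing matrix as a collection $\Pi = (\Pi_{i,j})_{i<j}$ of blocks, one per unordered pair of sets of threads $(T_i,T_j)$; the \emph{same area} hypothesis guarantees that corresponding blocks $\Pi^{(1)}_{i,j}$ and $\Pi^{(2)}_{i,j}$ have matching dimensions, so that \emph{pairwise equivalence} means $\Pi^{(1)}_{i,j}\sim\Pi^{(2)}_{i,j}$ for every pair.

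For the forward implication I would assume $D_{W_1}\simeq D_{W_2}$ and track the action of an equivalence on the crossings. Because the underlying projections coincide up to isotopy and carry the same crossing sequences, any such equivalence restricts, on the level of the shared projection, to a symmetry of that projection; this symmetry decomposes into operations that neither create nor destroy crossings, namely a relabeling of the threads within each color group and a change of the chosen fundamental domain (a lattice translation inducing a cyclic shift of the crossing data), the planar isotopies having already been accounted for by the hypothesis. Each of these is precisely an elementary move generating the equivalence relation on crossing matrices, so the induced action carries $\Pi^{(1)}_{i,j}$ to $\Pi^{(2)}_{i,j}$ block by block, and the crossing matrices are pairwise equivalent.

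For the reverse implication I would start from pairwise-equivalent crossing matrices and reconstruct an explicit equivalence. For a single pair $(T_i,T_j)$ the relation $\Pi^{(1)}_{i,j}\sim\Pi^{(2)}_{i,j}$ supplies a combinatorial matching---a relabeling together with a cyclic shift---carrying the over/under pattern of $D_{W_1}$ restricted to that pair onto the corresponding pattern of $D_{W_2}$, which one realizes geometrically by a finite sequence of moves in the torus together with a change of fundamental domain. The remaining task, which I expect to be the main obstacle, is to perform all of the pairwise matchings \emph{simultaneously} through a single equivalence of the full motif. Here I would invoke the defining feature of an untwisted $(p,q)$-weave: crossings occur only between distinct color groups and genuinely involve just two sets of threads at a time, so there is no three-body interaction among triples $(T_i,T_j,T_k)$ that could obstruct gluing the pairwise matchings together. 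Concretely, I would verify that the relabelings and cyclic shifts demanded by the individual blocks become mutually compatible once a single global labeling and one common fundamental domain are fixed, so that the pairwise transformations assemble into one equivalence $D_{W_1}\simeq D_{W_2}$, completing the argument.
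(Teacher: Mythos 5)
Your reverse implication rests on a principle that is false in this setting, and this is the decisive gap. You read ``pairwise equivalent'' as: each block $\Pi^{(1)}_{i,j}$ is equivalent to $\Pi^{(2)}_{i,j}$ by \emph{its own} relabeling and cyclic shift, and you then propose to glue these independent matchings into a single equivalence of motifs, arguing that ``there is no three-body interaction among triples $(T_i,T_j,T_k)$ that could obstruct gluing.'' But there is exactly such an interaction: along a single thread $t_i$, its crossings with $T_j$ and with $T_k$ are interleaved in a definite order, and applying to $M_{i,j}$ a cyclic shift different from the one applied to $M_{i,k}$ changes this interleaving, producing a motif that cannot be superimposed on the original. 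This is why the paper's full statement (Theorem 3.9) defines pairwise equivalence to mean that \emph{the same} cyclic or countercyclic permutation of rows and columns (or the same rotation by $\pi$, or by $\frac{\pi}{2}$ with inversion of all symbols) is applied \emph{simultaneously} to all $\frac{N(N-1)}{2}$ matrices; the paper's Remark 3.10 and its Figure 9 (non-equivalent kagome diagrams) record precisely the failure mode your gluing step would need to exclude, and it genuinely occurs, so the compatibility you hope to ``verify'' does not hold in general. Under the simultaneity hypothesis the reverse direction needs no gluing at all: a common cyclic shift is a translation of the unit cell in the infinite diagram, and a common rotation corresponds to re-cutting along the reversed meridian-longitude pair, each of which immediately yields equivalent motifs.

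The forward implication also has a gap. You assert that an equivalence of motifs restricts to a symmetry of the shared projection that decomposes into thread relabelings and lattice translations, but equivalence of weaving motifs here means equivalence of link diagrams in the thickened torus, which by the generalized Reidemeister theorem (the paper's Theorem 3.1) is generated by Reidemeister moves $\Omega_1,\Omega_2,\Omega_3$ on the torus, surface isotopies, and \emph{torus twists}; an equivalence may pass through diagrams whose projections differ, so your restriction claim is unjustified. The actual content of the paper's proof is the verification, generator by generator, that the crossing matrices are unchanged: $\Omega_1$ and $\Omega_2$ do not occur once geodesic representatives are taken; an $\Omega_3$ move involves three threads from three distinct sets and hence alters no crossing matrix, since each matrix records only two-set data; and a torus twist is a modular transformation, not a lattice translation, which amounts to cutting the same infinite diagram along a different meridian-longitude pair and therefore induces a common cyclic or countercyclic permutation of rows and columns (with the $\frac{\pi}{2}$ rotation arising from exchanging meridian and longitude). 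Your proposed decomposition silently assumes away all three of these checks, and in particular the torus twists, which are not covered by ``a lattice translation inducing a cyclic shift of the crossing data.''
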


Therefore, it is possible to construct non-equivalent doubly periodic untwisted $(p,q)$-weaves from a given one using a non-equivalent set of crossing matrices. 
However, to find the crossing number of such a weave, one must find the next smallest multipliers $k_{l}$ compatible 
with a solution of $(\mathcal{S}_{min})$ in Theorem 1.2 and corresponding to a weaving diagram with a set of crossing matrices equivalent to $\Pi$.\\

% %%%% STUDY PLAN %%%%%

This paper begins by introducing the main definitions of untwisted weaves and weaving diagrams in Section 2.
Then, Section 3 discusses on the classification of doubly periodic untwisted $(p,q)$-weaves, considering 
the characterization of the equivalence classes in Section 3.1 and the computation of their crossing number in Section 3.2.
Finally, we apply our new results to the construction and classification of simple examples of square and kagome doubly periodic untwisted $(p,q)$-weaving diagrams by hand in Section 3.3. \\ 

%%%%%%%%%%%%%%%%%%%%%%%%%%%%%%
%%%%%%%%%%%%%%%%%%%%%%%%%%%%%%
%%%%%%%%%%%%%%%%%%%%%%%%%%%%%%

~

\section{Untwisted weaves and their diagrams}\label{sec:2}
%%%%%%%%%%%%%%%%%%%%%%%%%%%%%%%%
 
In this section, we will define an untwisted weave (Fig.~\ref{fig1}(C)) from a set of geodesics embedded in $\mathbb{E}^2$ (Fig.~\ref{fig1}(A)), 
with a crossing information to each intersection (Fig.~\ref{fig1}(B)), 
lifted to the topological ambient space $\mathbb{X}^3 = \mathbb{E}^2 \times  \mathrm{I}$, with $\mathrm{I} = [-1,1]$.
Considering the natural projection map to the Euclidean plane $\pi: \mathbb{X}^3 \to \mathbb{E}^2$, $(x,y,z) \mapsto (x,y,0)$, 
we start by organizing the components in different sets.

 \begin{definition}\label{def:2-1} 
 Let $\Gamma = (\Gamma_1, \cdots, \Gamma_N)$ be a set of infinite colored geodesics embedded in $\mathbb{E}^2$, 
 which belong to $N \geq 2$ disjoint color groups such that, two geodesics belong to the \textit{same color group} if they do not intersect, 
or belong to \textit{different color groups} if they intersect once. 
 \end{definition}
 
Then, these geodesics will be lifted in the three-dimensional ambient space such that the infinite curve components do not intersect but cross over or under each other, as for general knots and links. 
This can be represented by two intersecting arcs on $\mathbb{E}^2$, after projection by the map $\pi$, as illustrated in Fig.~\ref{fig1}(B).
So, if $Q$ is such a point of intersection in the plane, then the inverse image $\pi^{-1}(Q)$ of $Q$ in $\mathbb{X}^3$ has exactly two distinct points.
We therefore mimic knot theory by recording the extra information of which arc is over and which is under 
at each such intersection point on the plane, and also call this region a \textit{crossing}, as explained in \cite{ref17}. 
Before lifting our geodesics to the thickened Euclidean plane, we describe the crossings in $\mathbb{E}^2$ using a sequence of positive integers, 
as suggested by Yaghi et al. \cite{15}, that will specify the number of consecutive over and undercrossings for each geodesic.

\begin{definition}\label{def:2-2} 
Let $i$, $j$, $k$, $l$ be strictly positive integers, and let $\Gamma_i$ and $ \Gamma_j$ be two disjoint sets of colored geodesics in $\mathbb{E}^2$.
Then by walking on an oriented geodesic $\gamma^k_i \in \Gamma_i$, the \textit{crossing sequence} $C^k_{i,j}$ of $\gamma^k_i$ with $\Gamma_j$ is defined either by, 
 \begin{enumerate}
    \item a sequence $(+1,0)$ (resp. $(0,-1)$) if $\gamma^k_i$ is always over (resp. under) all the components of $\Gamma_j$.
    \item a sequence $(\cdots, + p_l, - p_{l+1}, +p_{l+2}, \cdots)$ of minimal length, where $p_l$ are strictly positive integers, 
    such that there exists a crossing $c$ between $\gamma^k_i$ and $\gamma^{k_i}_j \in \Gamma_j$ whose closest neighboring 
    crossing in the opposite direction is an undercrossing, and from which $\gamma^k_i$ will have $p_l$ consecutive 
    overcrossings with the geodesics of $\Gamma_j$, followed by $p_{l+1}$ consecutive undercrossings, 
    followed by $p_{l+2}$ consecutive overercrossings and so forth. 
 \end{enumerate}
 Moreover, we denote by $\Sigma_{i,j} = (C^k_{i,j})_{k > 0}$, with $i,j  \in (1, ..., N), i < j$, the set of crossing sequences associated to the pair $(\Gamma_i,\Gamma_j)$, 
 $k$ being the index of the geodesics of $\Gamma_i$.
\end{definition}

\begin{remark}\label{rmk:2-3} 
The set of crossing sequences $\Sigma_{j,i} = (C^{k'}_{j,i}) _{k' > 0}$, with $i,j  \in (1, ..., N), i < j$ is deduced from $\Sigma_{i,j}$ for any pair $(\Gamma_i,\Gamma_j)$, and conversely.
\end{remark}

We can now define an \textit{untwisted weave} as the lift to the thickened Euclidean plane of these crossed planar geodesics, with respect to a set of crossing sequences.

\begin{definition}\label{def:2-4} 
Let $\Gamma = (\Gamma_1, \cdots, \Gamma_N)$ be a set of geodesics belonging to $N \geq 2$ color groups in $\mathbb{E}^2$ 
with a crossing information at each vertex according to a set of crossing sequences $\Sigma = (\Sigma_{i,j})_{i>j}$ with $i,j  \in (1, ..., N)$.
Then, we call \textit{untwisted weave} the lift to $\mathbb{E}^3$ by $\pi^{-1}$ of the pair $(\Gamma,\Sigma)$, which is an embedding of non-intersecting infinite curves in the thickened Euclidean plane. 
Each lifted geodesic is called a \textit{thread} and two threads are said to be in the same \textit{set of thread} $T_i$ if they are the lift of geodesics belonging to the same color group $\Gamma_i$. 
Moreover, we call \textit{strand} any compact non-degenerate subset $s \subseteq t$ of a thread $t$.
\end{definition}

\begin{remark}\label{rmk:2-5} 
The crossing sequence $C^k_{i,j}$ of a geodesic $\gamma^k_i$ with the set of colored geodesic $\Gamma_j$ 
is also the crossing sequence of its lift $t^k_i = \pi^{-1}(\gamma^k_i)$ with the set of threads $T_j$. 
We therefore use the same notation for both spaces. 
\end{remark}

Now, consider that given an untwisted weave, we allow continuous deformations, or isotopy, of our threads in $\mathbb{X}^3$. 
We have to characterize the property of such a weave to be \textit{entangled} by defining any non-entangled weave as the \textit{trivial weave}, also called \textit{unweave}. 
This means that the structure does not \textit{hang together}, as defined in \cite{ref11}. 

\begin{definition}\label{def:2-6} 
An untwisted weave is said to be the \textit{unweave} if it is isotopic to a single set of threads in $\mathbb{X}^3$.
\end{definition}

\begin{definition}\label{def:2-7} 
An untwisted weave $\mathrm{W}$ is said to be \textit{entangled}, if it is not isotopic to the unweave.
\end{definition}

Then, recall that a weave $\mathrm{W}$ in a general position in $\mathbb{X}^3$ can be projected onto the Euclidean plane by the map $\pi$, as in knot theory \cite{ref17}.   
By general position, we mean that the projection of two threads by $\pi$ in $\mathbb{E}^2$ are distinct,
This projection leads to a planar quadrivalent connected graph $W_0$, meaning that all the vertices have a degree four, which is isotopic to the original set of geodesic $\Gamma$.
In the particular case of a doubly periodic structure \cite{ref4}, that will be study in the next section, 
any unit cell can be seen as a link in the thickened torus as described in Fig.~\ref{fig2}. 

\begin{definition}\label{def:2-8} 
The projection $W_0$ of a weave $\mathrm{W}$ in general position onto $\mathbb{X}^2$ by the map $\pi: \mathbb{X}^3 \to \mathbb{X}^2$, $(x,y,z) \mapsto (x,y,0)$ 
is called a \textit{regular projection}, and once an over or under information is given at each vertex of $W_0$, 
we say that this structure is an infinite \textit{weaving diagram} $D_{W_0}$. 
Moreover, if $D_{W_0}$ is doubly periodic, then any unit cell contains essential simple closed curve components and is called a \textit{weaving motif}.
\end{definition}

%%%%%%%%%%%%%%%%%%%%%%%%%%%%%% FIGURE 2 %%%%%%%%%%%%%%%%%%%%%%%%%%%%%%
\begin{figure}[h]
\centering
   \includegraphics[scale=1]{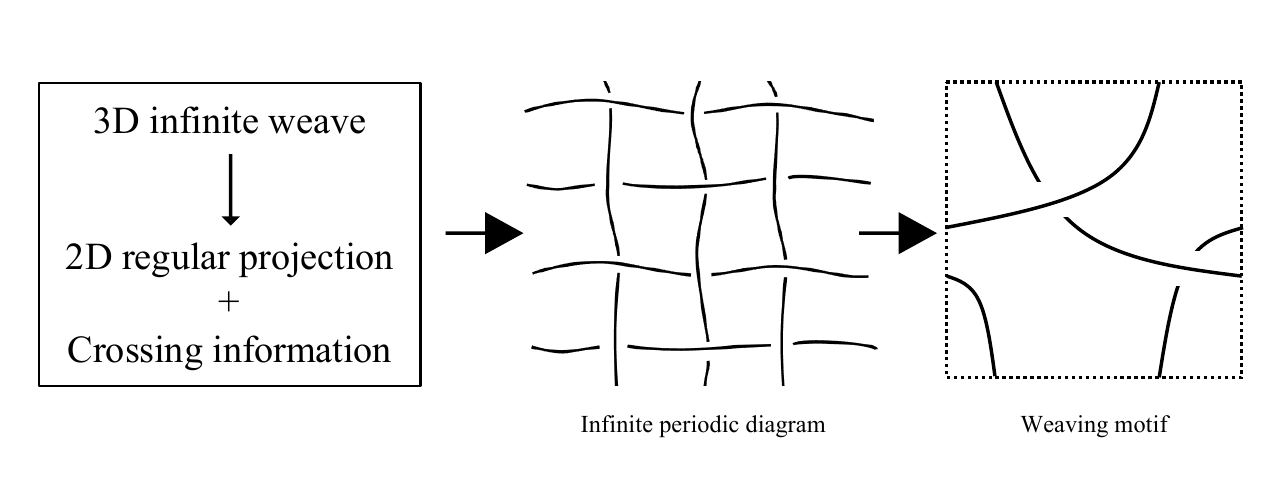}
      \caption{\label{fig2} Weaving Diagram}
\end{figure}
%%%%%%%%%%%%%%%%%%%%%%%%%%%%%% FIGURE 2 %%%%%%%%%%%%%%%%%%%%%%%%%%%%%%

We can discuss the entangled property of a weave by analyzing its planar diagram.
First, we introduce the concept of \textit{blocking crossings} in terms of Reidemeister moves of type $\Omega_3$ (Fig.~\ref{fig7}).

\begin{definition}\label{def:2-9} 
Let $t_i \in T_i$, $t_j \in T_j$ and $t_k \in T_k$ be three threads of an untwisted weave $\mathrm{W}$, with $N \geq 2$ sets of threads $(T_1, \cdots,T_N)$, for all $j,k \in (1, \cdots , N)$ distinct. 
Then, if there exists a crossing $c = \pi(t_j) \cap \pi(t_k)$ on the weaving diagram $D_{W_0}$ of $\mathrm{W}$, 
such that a Reidemeister move $\Omega_3$ is not admissible for $\pi(t_i)$ at the neighborhood of $c$, we say that $c$ is a \textit{blocking crossing}.
\end{definition}

%%%%%%%%%%%%%%%%%%%%%%%%%%%%%% FIGURE 3 %%%%%%%%%%%%%%%%%%%%%%%%%%%%%%
\begin{figure}[h]
\centering
   \includegraphics[scale=1]{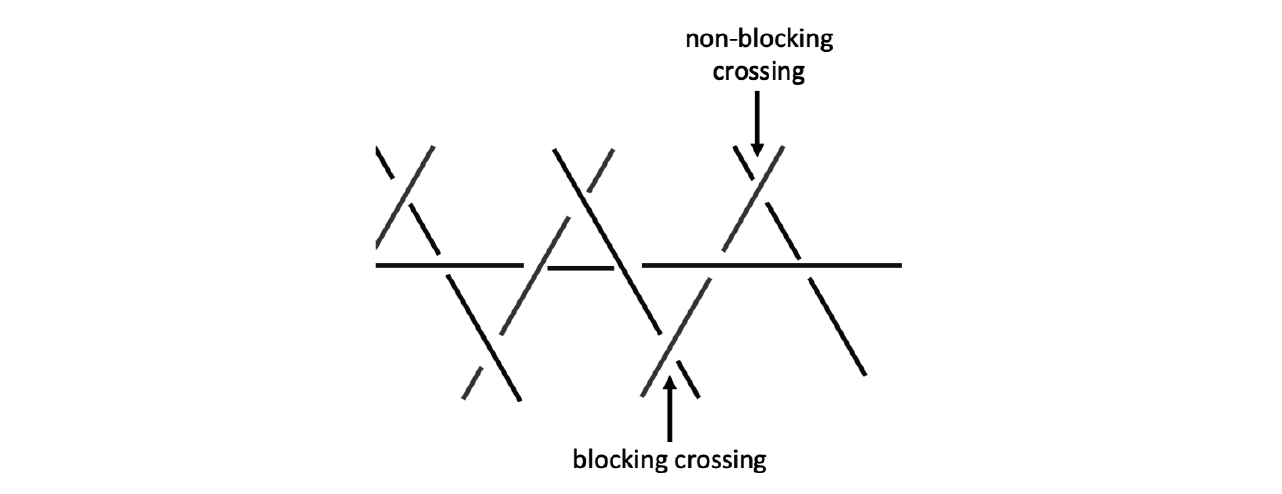}
      \caption{\label{fig3} Blocking crossing}
\end{figure}
%%%%%%%%%%%%%%%%%%%%%%%%%%%%%% FIGURE 3 %%%%%%%%%%%%%%%%%%%%%%%%%%%%%%

Now, notice that any thread projected on $\mathbb{E}^2$ divides the plane in two disjoint regions, namely its \textit{left} and its \textit{right}, arbitrary labelled. 
We can characterize the entangled property of a weave $\mathrm{W}$ in terms of existence of blocking crossings in its diagram by the following Proposition, whose proof follows directly.  

\begin{proposition}\label{prop:2-10} 
An untwisted weave with $N \geq 2$ sets of threads $T_1, \cdots,T_N$ is \textit{entangled} if and only if 
for all $i \in (1, ..., N)$, each thread $t_i \in T_i$ projected on $\mathbb{E}^2$ admits a blocking crossing $c = \pi(t_j) \cap \pi(t_k)$ on its left, 
and a blocking crossing crossing $c' = \pi(t'_j) \cap \pi(t'_k)$ on its right, where $t_j , t'_j \in T_j$ and $t_k, t'_k \in T_k$ are disjoint threads, for all $j,k \in (1, \cdots , N)$ distinct. 
\end{proposition}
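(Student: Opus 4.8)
The plan is to prove both implications by contraposition, using as the single structural input the content of Definition~\ref{def:2-9}: a crossing $c=\pi(t_j)\cap\pi(t_k)$ is blocking for $t_i$ precisely when the strand of $\pi(t_i)$ meeting the corner at $c$ cannot be pushed across $c$ by a move $\Omega_3$, i.e.\ when that strand is caught between the over- and under-passes of $t_j$ and $t_k$ rather than running entirely above or entirely below them. Thus ``$t_i$ has no blocking crossing on its left'' is exactly the statement that every crossing which $\pi(t_i)$ meets while being swept into its left half-plane admits the corresponding $\Omega_3$ move. I would phrase the proposition as $P\iff Q$ with $P=$ ``$\mathrm{W}$ is entangled'' and $Q=$ ``every thread of every $T_i$ has a blocking crossing on its left and on its right,'' and establish $\neg Q\Rightarrow\neg P$ and $\neg P\Rightarrow\neg Q$.

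For $\neg Q\Rightarrow\neg P$, assume some $t_i\in T_i$ has no blocking crossing on, say, its left. Orienting $\mathbb{E}^2$ so that ``left'' is a fixed half-plane direction transverse to $\pi(t_i)$, I would sweep the strand of $t_i$ leftward, removing one crossing at a time: each crossing it encounters is, by hypothesis and the reformulation above, amenable to an admissible $\Omega_3$, so the strand can be pushed past it. Carried out monotonically in the sweep direction, this isotopes $t_i$ off the left of the diagram into a layer of $\mathbb{X}^3$ disjoint from the remaining threads, exhibiting $\mathrm{W}$ as split. Peeling threads off in this way and continuing until only one family remains shows that $\mathrm{W}$ is isotopic to a single set of threads, i.e.\ the unweave, so $\mathrm{W}$ is not entangled.

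For $\neg P\Rightarrow\neg Q$, assume $\mathrm{W}$ is the unweave. By Definition~\ref{def:2-6} it is isotopic to a single set of threads, hence admits a diagram in which the $N$ families are separated into disjoint, non-interacting layers. The thread occupying the topmost layer passes over every crossing it meets, so no $\Omega_3$ involving it is ever obstructed; consequently that thread has no blocking crossing on either side, and $Q$ fails. Since the entangled/unweave dichotomy is an isotopy invariant (Definitions~\ref{def:2-6},~\ref{def:2-7}), this conclusion is independent of the chosen diagram, which is what is required.

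The main obstacle is upgrading the pointwise admissibility of $\Omega_3$ in the first implication to a single global isotopy that actually frees $t_i$, and then iterating it correctly. One must order the $\Omega_3$ moves so that the sweep terminates and never pushes the strand into a configuration that a previously cleared crossing would re-block; this is where using the absence of blocking crossings on the \emph{whole} left region of $\pi(t_i)$, and not merely at one crossing, is essential. One must also make precise the reduction to ``a single set of threads'': after a thread is removed, the remaining sub-weave must still be amenable to the same peeling, which I would justify from the observation that deleting an entirely over- (or under-) lying strand cannot convert an admissible $\Omega_3$ into a blocked one. Once these two bookkeeping points are settled, the proposition follows directly from Definition~\ref{def:2-9}, as claimed.
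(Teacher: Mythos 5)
First, a point of comparison: the paper gives no argument at all for this proposition (it is asserted that the proof ``follows directly''), so your attempt can only be judged on its own terms, and on those terms it has a genuine gap in each direction.

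In the direction $\neg Q\Rightarrow\neg P$, the fatal step is the peeling iteration. Your sweep (granting the ordering issues you correctly flag) frees the single thread $t_i$, but Definition~\ref{def:2-6} requires the \emph{entire} weave to be isotopic to a single set of threads, and the hypothesis $\neg Q$ says nothing about any thread other than $t_i$: after $t_i$ is lifted into its own layer, the remaining threads may all have blocking crossings on both sides and remain genuinely entangled, so the peeling stalls after one step. Your justification for continuing (deleting an over-lying strand cannot turn an admissible $\Omega_3$ into a blocked one) addresses the wrong point; the obstruction is not that removal creates new blocks, but that the remaining threads were never assumed to lack them. Concretely, take $T_1$ and $T_2$ plain-woven with each other and a third set $T_3$ whose threads lie entirely above everything (crossing sequence $(+1,0)$, which Definition~\ref{def:2-2} allows): no crossing is blocking for any thread of $T_3$, so $\neg Q$ holds, yet the weave cannot be isotoped to a single set of threads, so $\neg P$ fails. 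This shows the implication you are proving is false under the literal reading of Definitions~\ref{def:2-6}--\ref{def:2-7}; the proposition is tenable only if ``unweave'' is read in the ``does not hang together'' sense of the Gr\"unbaum--Shephard notion the paper cites, i.e.\ some nonempty proper sub-collection of threads can be split off from the rest. Under that reading, freeing the single thread $t_i$ already completes this direction, and the iteration should be deleted rather than repaired.

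In the direction $\neg P\Rightarrow\neg Q$, the gap is a diagram-dependence error. The condition $Q$ is a property of the given projection of the weave (for an untwisted weave, the geodesic diagram), whereas you verify its failure only in a conveniently isotoped, layered diagram of the unweave. The closing appeal to the isotopy invariance of the entangled/unweave dichotomy is backwards: entanglement is invariant, but what you need is that the \emph{failure of $Q$} transfers from the layered diagram back to the original projection, i.e.\ that the blocking-crossing condition is itself insensitive to the choice of diagram --- and that transfer is essentially the content of the proposition, not something that may be assumed. Indeed, isotopies and $\Omega_3$ moves can create and destroy the alternating triangles that realize blocking crossings (this is precisely why Proposition~\ref{prop:3-16} needs a hypothesis restricting how $\Omega_3$ moves meet A-triangles), so $Q$ is manifestly diagram-sensitive. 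To close this direction you must argue on the given diagram itself, for example by showing that a thread which can be isotoped into a separate layer cannot retain a blocking crossing on both sides in its original geodesic projection.
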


Finally, notice that the definition of an untwisted weave implies the existence of a twisted version. 
We introduce the definition of a \textit{twisted weave} below but restrict to the classification of untwisted weaves in this paper. 
The case of \textit{twisted weaves} is under study and will appear in a future work.\\
   
First, we start with the same sets of colored geodesics and crossing sequences $(\Gamma,\Sigma)$ in $\mathbb{E}^2$, as in Definition 2.4. 
However, before lifting this fixed pair to the Euclidean thickened plane, we need to introduce some local transformations in our sets of colored geodesics, 
which consists in possibly many local \textit{cut and glue} operations applied to two closest neighboring geodesics $\gamma$ and $\gamma'$ of a same color group,  
in order to introduce new crossings, called \textit{twists}, as illustrated in Fig.~\ref{fig4}. 

%%%%%%%%%%%%%%%%%%%%%%%%%%%%%% FIGURE 4 %%%%%%%%%%%%%%%%%%%%%%%%%%%%%%
\begin{figure}[h]
\centering
   \includegraphics[scale=1]{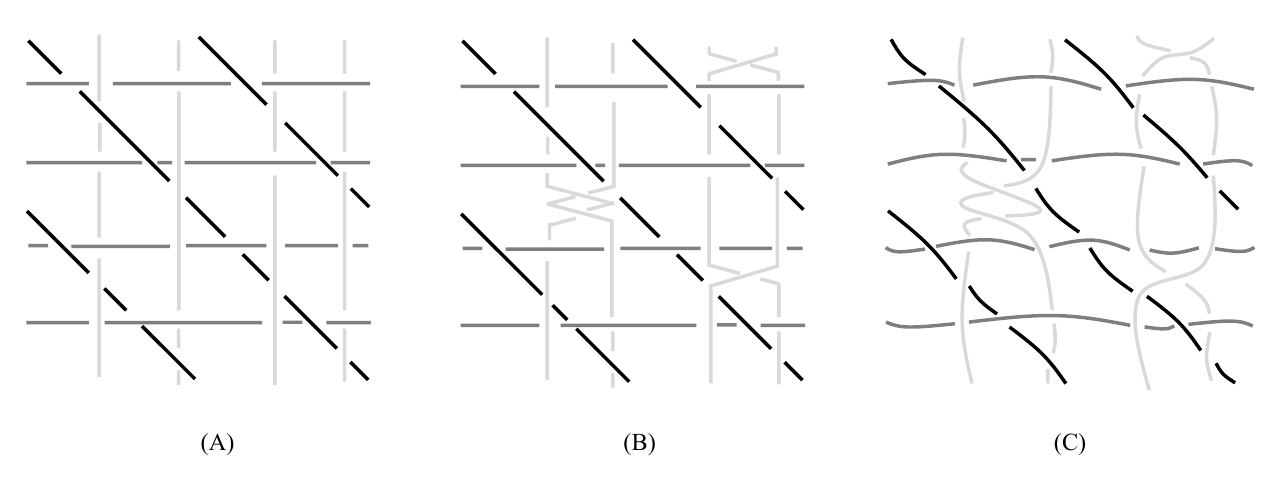}
      \caption{\label{fig4} 
(A) Set of colored geodesics with crossing information in the Euclidean plane. 
(B) Introduction of twists in the Euclidean plane. 
(C) Twisted weave in the thickened Euclidean plane.}
\end{figure}
%%%%%%%%%%%%%%%%%%%%%%%%%%%%%% FIGURE 4 %%%%%%%%%%%%%%%%%%%%%%%%%%%%%%

These local twists can be characterized by the \textit{local linking number} $L_D(\gamma,\gamma')$ of the two twisted curves  $\gamma$ and $\gamma'$, 
which is the sum of the signs of all the crossings between these two curves in a given neighborhood $D$ of $\mathbb{E}^2$, where each crossing is given a sign, as in Fig.~\ref{fig5}. 

%%%%%%%%%%%%%%%%%%%%%%%%%%%%%% FIGURE 5 %%%%%%%%%%%%%%%%%%%%%%%%%%%%%%
\begin{figure}[h]
\centering
   \includegraphics[scale=0.5]{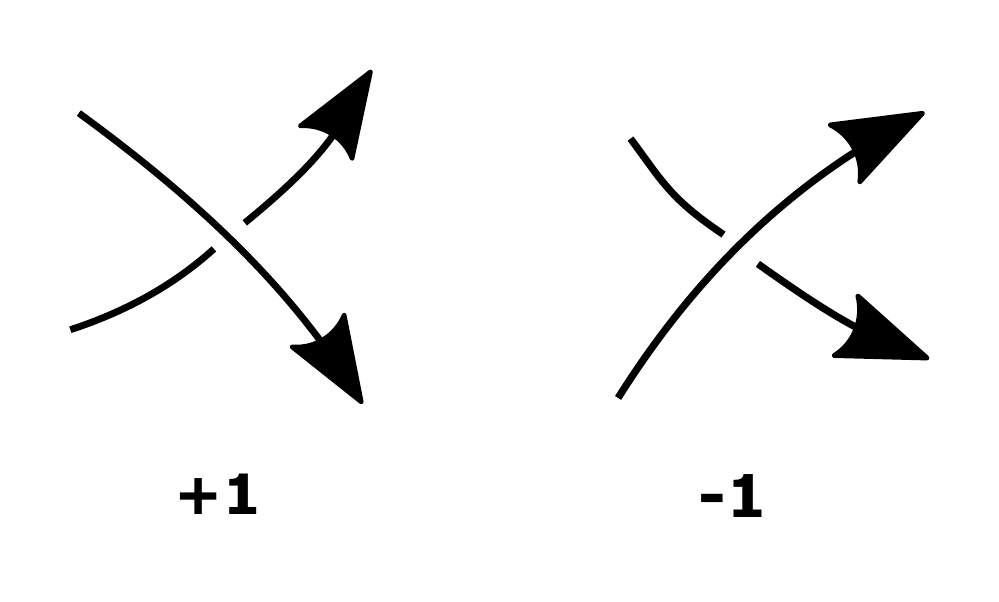}
      \caption{\label{fig5} Sign convention.}
\end{figure}
%%%%%%%%%%%%%%%%%%%%%%%%%%%%%% FIGURE 5 %%%%%%%%%%%%%%%%%%%%%%%%%%%%%%

\begin{definition}\label{def:2-11} 
Let $\Gamma = (\Gamma_1, \cdots, \Gamma_N)$ be a set of geodesics belonging to $N \geq 2$ color groups in $\mathbb{E}^2$,  
and let $\Gamma_i$ be a set of geodesics of the same color $(\cdots, \gamma_i^{k-1}, \gamma_i^k, \gamma_i^{k+1},\cdots)$, 
indexed by a positive integer $k$ in terms of closest neighboring components, for any $i \in 1, \cdots , N$.
Let $D$ be a small disk containing only $\gamma_i^k$ and $\gamma_i^{k+1}$, meaning that there exists no crossing or any other geodesic in $D$.
Then, we say that $\gamma_i^k$ and $\gamma_i^{k+1}$ \textit{twist} if they are cut and glued such that $L_D(\gamma_i^k , \gamma_i^{k+1}) \neq 0$ in $D$, up to isotopy.
\end{definition}

Considering that the total linking number of two curves that twist in $n$ disjoint disks $D_1, \cdots D_n$ is the sum of the local linking number for each of these disks, 
we can define a weave from any pair $(\Gamma,\Sigma)$ that can be lifted into an untwisted weave, to which twists can be added such that the color groups associated to each set of threads is preserved.

\begin{definition}\label{def:2-12} 
A \textit{weave} is the lift to $\mathbb{X}^3$ of a pair $(\Gamma,\Sigma)$ satisfying Definition 2.4, 
possibly transformed to admit twists such that if two threads twist, they cannot twist with any other threads and their total linking number must be even. \\
\end{definition}

%%%%%%%%%%%%%%%%%%%%%%%%%%%%%%
%%%%%%%%%%%%%%%%%%%%%%%%%%%%%%
%%%%%%%%%%%%%%%%%%%%%%%%%%%%%%

\section{Classification of doubly periodic untwisted weaves}\label{sec:3}

Weaves are mainly characterized by a number $N$ of disjoint sets of threads $T_1, …,T_N$, 
as well as a set of crossing sequences $\Sigma = \{\Sigma_{i,j} \ \ | \ \  i,j  \in (1, ..., N), i < j\}$.
As seen in the previous section, it is convenient to study their planar diagrams. 
Given a graph $\Gamma$ satisfying the definition of a regular projection, or equivalently Definition 2.1, and a set $\Sigma$, 
we have seen that it is possible to build a weaving diagram by assigning an over or under information to each vertex.
However, this attribution is not unique, as illustrated by some examples of existing woven materials showing that 
the weaving diagrams of two distinct weaves can be reconstruct from the same pair $(\Gamma,\Sigma)$.
The simplest cases are the diagrams related to the \textit{basket weave $(2,2)$} and the \textit{twill weave $(2,2)$}, see Fig.~\ref{fig6}. 
The two diagrams can be reconstructed from the skeleton of a square \textit{topological tiling}, which encodes the two sets of threads of these weaves, 
and such that each thread of the two sets has periodically two consecutive overcrossings followed by two consecutive undercrossings.
Nevertheless, these two woven materials have distinct physical properties, such as strength or stiffness, 
and it is important to characterize these differences from a mathematical point of view. 
Note that the notion of topological tiling considered here was defined by  B. Grünbaum and G.C. Shephard in \textit{Tilings and Patterns} (Chapter 4) \cite{ref10}, 
such that from a topological point of view, a tiling by irregular polygons is equivalent to a tiling by regular polygons 
if the application of a homeomorphism to such a tiling preserves the degree of the vertices and the number of adjacent and neighbors of each tile.
This observation motivated the study of equivalence classes of weaves and the development of a new parameter $\Pi$, 
such that any weaving diagram constructed from the triple $(\Gamma,\Sigma,\Pi)$ would be \textit{unique}, up to equivalence. 
In this paper, we will study the case of doubly periodic untwisted $(p,q)$-weaves and a generalization is planned for future work.

%%%%%%%%%%%%%%%%%%%%%%%%%%%%%% FIGURE 6 %%%%%%%%%%%%%%%%%%%%%%%%%%%%%%
\begin{figure}[h]
\centering
   \includegraphics[scale=0.8]{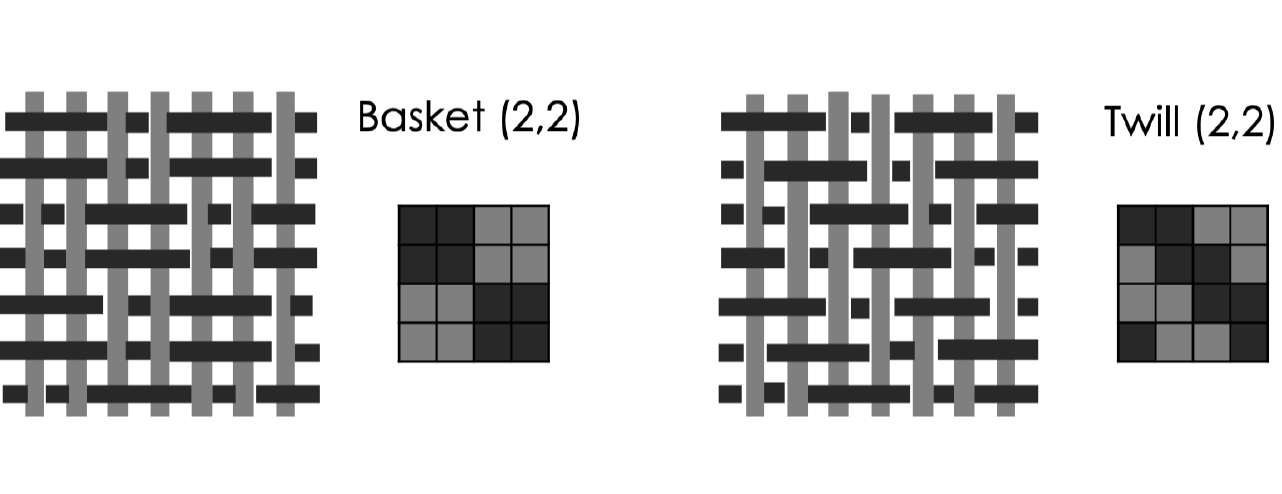}
      \caption{\label{fig6} Twill and Basket $(2,2)$ square weaving diagram with their associate design.}
\end{figure}
%%%%%%%%%%%%%%%%%%%%%%%%%%%%%% FIGURE 6 %%%%%%%%%%%%%%%%%%%%%%%%%%%%%%

%%%%%%%%%%%%%%%%%%%%%%%%%%%%%%%%%%%%%%%%%%%%%%%%%%%%%%%%%%%%%%%%%%%
\subsection{Equivalence classes of doubly periodic untwisted $(p,q)$-weaves}\label{sec:3-1}

~

~

A doubly periodic untwisted $(p,q)$-weave $\mathrm{W}$ is an untwisted weave which admits translational symmetry in two non-parallel directions, 
and such that all the threads of the set $T_i$ have the same crossing sequence $C_{i,j}$ with the set $T_j$, described by two positive integers only.  
More precisely, recall from Definition 1.1 that considering the strictly positive integers $i$, $j$, $p$, and $q$, 
the crossing sequence $C_{i,j} = (+p,-q)$ associated with the disjoint sets of threads $T_i$ and $T_j$ 
is defined as if one travels along any thread $t_i \in T_i$, there exists a crossing $c_{i,j} = \pi(t_i) \cap \pi(t_j)$ having at least one of its two neighboring crossings
 $c'_{i,j}$ with a different over or under information, where $c'_{i,j}$ is a crossing between $t_i$ and another thread of $T_j$.
Then, walking on $t_i$ in the opposite direction of $c'_{i,j}$, there are cyclically $p$ crossings for which $t_i$ is over the other threads of  $T_j$, followed by $q$ crossings for which it is under. 
Moreover, if $C_{i,j} = (+p,-q)$, then $C_{j,i} = (+q,-p)$ is called the \textit{complementary crossing sequence} of $C_{i,j}$, 
and only one of these two crossing sequences is sufficient to describe the entanglement of the structure.
The case of non-crossing sets of threads with crossing sequences $(+1,0)$ or $(0,-1)$ is also considered for an entangled weave with $N > 2$, sets of threads.
Proposition 2.9 implies that on a periodic weaving motif, if a set of threads crosses another set, then there must be at least an overcrossing and an undercrossing 
between the threads of these two sets; otherwise, a single crossing may suffice if it is a blocking crossing for another set. 

The notion of equivalence adopted for this class of weaves has been defined by S. Grishanov et al. by an extension of 
the classical \textit{Reidemeister Theorem} for the general case of doubly periodic entangled structures represented by a torus diagram \cite{ref4}, namely a weaving motif in our case.

\begin{theorem}\label{th:3-1} \textbf{(Reidemeister Theorem for Weaves \cite{ref4})} 
Two doubly periodic weaves in $\mathbb{X}^3$ are \textit{ambient isotopic} if and only if their torus diagrams can be obtained from each other by a sequence 
of \textit{Reidemeister moves} $\Omega_1$, $\Omega_2$, and $\Omega_3$, isotopies on the surface of the torus, and \textit{torus twists}. 
\end{theorem}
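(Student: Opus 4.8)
The plan is to recognize this as the Reidemeister theorem for links in a thickened torus, applied to the quotient of a doubly periodic weave by its lattice of translation symmetries, and to prove it by the classical general-position argument adapted to a thickened surface. First I would pass from the periodic picture in $\mathbb{X}^3 = \mathbb{E}^2 \times \mathrm{I}$ to its quotient: if $\Lambda \subset \mathbb{E}^2$ is the lattice of translations preserving $\mathrm{W}$, then $\mathrm{W}$ descends to a link $L_{\mathrm{W}}$ in the thickened torus $(\mathbb{E}^2/\Lambda) \times \mathrm{I}$, and a $\Lambda$-equivariant ambient isotopy of $\mathrm{W}$ is precisely the data of an ambient isotopy of $L_{\mathrm{W}}$; the torus diagram is exactly the image of $L_{\mathrm{W}}$ under the projection to $\mathbb{E}^2/\Lambda$. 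So the statement reduces to showing that two links in the thickened torus are ambient isotopic if and only if their torus diagrams differ by $\Omega_1$, $\Omega_2$, $\Omega_3$, surface isotopy, and torus twists.

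The \textbf{if} direction is routine, and I would dispatch it move by move. Each $\Omega_i$ is supported in a small disk on the torus, inside of which the standard local model realizes it as an ambient isotopy of the thickened disk, hence of the whole thickened torus; an isotopy of the surface $\mathbb{E}^2/\Lambda$ extends by the identity on the $\mathrm{I}$-factor to an ambient isotopy; and a torus twist is a mapping-class representative, namely a composition of Dehn twists along a meridian and a longitude, realized as a self-homeomorphism of the torus thickened trivially. Lifting each of these through the quotient returns a $\Lambda$-equivariant ambient isotopy of $\mathrm{W}$, which is the required conclusion.

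For the \textbf{only if} direction I would follow the standard proof of the Reidemeister theorem. Given an ambient isotopy $H$ carrying $L_0$ to $L_1$, I would perturb it to be generic with respect to the projection $\pi$, so that for all but finitely many times $t$ the diagram of $H_t$ is a regular projection that changes only by an isotopy of the torus surface, while at the finitely many exceptional times the one-parameter family of projected immersions acquires exactly one codimension-one singularity. The classification of generic singularities of a one-parameter family of curve immersions on a surface, namely a cusp, a simple tangency, or a triple point, is entirely local and identical to the planar case, and these correspond precisely to $\Omega_1$, $\Omega_2$, and $\Omega_3$ respectively; the intervals between exceptional times contribute only ambient surface isotopies.

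The step I expect to be the main obstacle is controlling the global topology that the planar argument does not see. Two issues arise. First, the isotopy $H_t$ may sweep the link around nontrivial loops of $\mathbb{E}^2/\Lambda$, and I must verify that the net effect on the diagram, after removing the Reidemeister moves and surface isotopies, is captured exactly by the mapping class group of the torus, that is, by compositions of torus twists, with no further global move needed; this is where the hypothesis that the admissible moves include torus twists is used essentially, and where the argument genuinely departs from the sphere and plane cases. Second, I would need to ensure that the generic perturbation and the resulting sequence of moves can be taken $\Lambda$-equivariant, equivalently performed coherently on the torus, so that the conclusion returns a legitimate isotopy of the doubly periodic weave rather than merely of an aperiodic patch.
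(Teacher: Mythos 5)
First, a point of reference: the paper does not prove this statement at all. Theorem \ref{th:3-1} is imported verbatim from Grishanov--Meshkov--Omel'chenko \cite{ref4} as the definition of equivalence used throughout Section 3, so there is no in-paper proof to compare against; your proposal has to be judged on its own terms, against the standard theory of links in thickened surfaces together with the periodic-to-quotient correspondence.

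Judged that way, there is a genuine error, and it sits exactly where you yourself flag ``the main obstacle.'' You reduce the theorem to the claim that two links in the thickened torus are ambient isotopic if and only if their diagrams differ by $\Omega_1$, $\Omega_2$, $\Omega_3$, surface isotopies, \emph{and torus twists}. That reduced claim is false. A torus twist is a nontrivial mapping class of $T^2$ (a composition of Dehn twists), and if $\phi$ is such a homeomorphism then $(\phi \times \mathrm{id})(L)$ is in general \emph{not} ambient isotopic to $L$ inside $T^2 \times \mathrm{I}$; it need not even be homotopic to it --- a Dehn twist along the meridian carries an essential curve of class $(1,0)$ to one of class $(1,1)$. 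For links in a \emph{fixed} thickened torus, the correct diagrammatic calculus uses Reidemeister moves and surface isotopies only. The torus twists enter the theorem for a different reason, one which your equivariance bookkeeping erases: an ambient isotopy of $\mathbb{X}^3$ between doubly periodic weaves is not required to commute with the lattice $\Lambda$, and in general cannot be made to. The lift of a torus twist to the plane is (a smoothing of) a linear shear $A \in SL(2,\mathbb{Z})$, which satisfies $\tilde{\phi}(x+\lambda) = \tilde{\phi}(x) + A\lambda$; it \emph{normalizes} the lattice action rather than commuting with it, and the ambient isotopy of $\mathbb{X}^3$ carrying $\mathrm{W}$ to $\tilde{\phi}(\mathrm{W})$ --- which exists because the orientation-preserving homeomorphisms of the plane form a connected group, e.g.\ via the path from $A$ to the identity in $SL(2,\mathbb{R})$ --- is not $\Lambda$-equivariant at any intermediate time. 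So your ``if'' direction for torus twists, which asserts that lifting returns a $\Lambda$-equivariant ambient isotopy, is wrong as stated, and it is wrong in a way that matters: if everything could be kept equivariant, torus twists would be unnecessary in the theorem.

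The same misdiagnosis undermines the ``only if'' direction. The difficulty there is not, as you suggest, that the isotopy may ``sweep the link around nontrivial loops of $\mathbb{E}^2/\Lambda$'' --- such sweeping, if the isotopy descended, would end at a homeomorphism isotopic to the identity and hence would only contribute surface isotopies and Reidemeister moves. The real difficulty is that an arbitrary ambient isotopy of $\mathbb{X}^3$ between two doubly periodic weaves does not descend to the quotient at all, and that the identification of each quotient with a standard torus depends on a choice of basis of the lattice (i.e.\ of unit cell) which the isotopy does not respect. The actual content of the theorem of \cite{ref4} is that this failure is measured precisely by the mapping class group of the torus: the correct quotient statement is that the two quotient links are related by a levelwise orientation-preserving \emph{homeomorphism} of $T^2 \times \mathrm{I}$, not by an ambient isotopy of it, and the diagrammatic moves then split accordingly --- the isotopy part yields Reidemeister moves plus surface isotopies (your generic-perturbation paragraph handles this part correctly), while the mapping class part yields the torus twists. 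Your proposal never establishes this decomposition, so the step carrying all the weight of the theorem is missing.
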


%%%%%%%%%%%%%%%%%%%%%%%%%%%%%% FIGURE 7 %%%%%%%%%%%%%%%%%%%%%%%%%%%%%%
\begin{figure}[h]
\centering
   \includegraphics[scale=0.65]{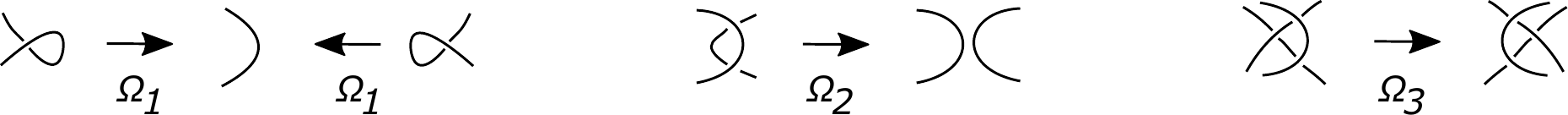}
      \caption{\label{fig7} Reidemeister moves}
\end{figure}
%%%%%%%%%%%%%%%%%%%%%%%%%%%%%% FIGURE 7 %%%%%%%%%%%%%%%%%%%%%%%%%%%%%%

%%%%%%%%%%%%%%%%%%%%%%%%%%%%%%%%

To construct our new parameter $\Pi$, we were inspired by the concept of \textit{design}, also called \textit{armor} in the literature, used for the classification of weaves in the textile industry \cite{ref18}.  
A design characterizes on a square grid the organization of crossings on a weaving motif, according to the periodic crossing sequence of an untwisted weave with $N = 2$ sets of threads. 
More information on designs from a mathematical viewpoint can be found in the works of Grünbaum and Shephard in \cite{ref7,ref8,ref9,ref11},   
which recall the results of E. Lucas \cite{ref2} who studied a particular class of doubly periodic untwisted weaves with two sets of threads, called regular satins, using arithmetic arguments. 
An example is given in Fig.~\ref{fig6} for the basket and twill weaves mentioned above.
To read a design, consider that the rows represent the horizontal strands on a unit cell, the columns the vertical strands, and each square represents a crossing.
Here, a gray square corresponds to a crossing where the horizontal strand is under the vertical strand, and conversely for a black square. 
Notice in Fig.~\ref{fig6} that the gray and black squares have a \textit{diagonal} organization for the case of twill weave, 
while they have a \textit{bloc} organization for the case of basket weave.
This illustrates the fact that there are different possibilities of assigning the crossing information to a same graph $\Gamma$, given a (set of) crossing sequence(s).
These different types of organization of crossings on a weaving motif justify the different physical properties of the corresponding woven materials.

Our purpose is to generalize this concept to doubly periodic untwisted $(p,q)$-weaving diagrams with $N \geq 2$ sets of threads, 
so that it can describe the organization of crossings for each pair of sets of threads on a unit cell.
Grünbaum and Shephard made an attempt using tilings by polygons in \cite{ref9}, 
which satisfy our definition of regular projection, considering geometric restrictions.
They assigned a label to each vertex in order to indicate the crossing information, 
but they mentioned that such an extended design can easily become very complicated and unintelligible. 
Our approach consist in creating a set of \textit{crossing matrices} associated with a weaving motif which would make it possible to distinguish the structures 
characterized by the same pair $(\Gamma,\Sigma)$, and thus become a \textit{weaving invariant} for the infinite diagram $D_{W_0}$.
Our concept of crossing matrices is directly related to the crossing sequences of a weaving diagram $D_{W_0}$, 
which means that each matrix is associated with a pair of distinct sets of threads of the diagrams. 
The elements of a crossing matrix are the symbols $+1$ representing an overcrossing, or $-1$ representing an undercrossing. 
In particular, we will work with square matrices associated with such a pair $(T_i , T_j)$, having size $m = p + q$, 
and associated to the crossing sequence $C_{i,j} = (+p , -q)$, as a natural generalization of Lucas' results.
This implies that a weaving motif $D_W$ will contain $m$ representative essential simple closed curves for each set $T_i$ and $T_j$.
We will see that once again only one matrix for each pair will suffice, so for a weaving diagram with $N$ sets of threads, 
we can define $\frac{N(N-1)}{2}$ matrices for $D_W$, 
such that each matrix encodes the crossing configuration between two sets of threads, from the point of view of one of them, see Fig.~\ref{fig8}.
This means that at an arbitrary crossing between two strands $s_i \in T_i$ and $s_j \in T_j$, with $T_i$ and $T_j$ two disjoint sets of threads, 
$s_i$ is over (resp. under) $s_j$, if we analyze the position of the strands of $T_i$ with respect to the strands of $T_j$.
or conversely that $s_j$ is under (resp. over) $s_i$, if we analyze the position of the strands of $T_j$ with respect to the strands of $T_i$. 

\begin{definition}\label{def:3-2} 
Let $i,j \in  \{1, \cdots , N\}$ and $C_{i,j} = (+p , -q)$ be the crossing sequence of the two disjoint sets of threads $T_i$ and $T_j$ 
of a doubly periodic untwisted $(p,q)$-weaving diagram $D_{W_0}$ with $N \geq 2$.
Let $M_{i,j}$ be a square $m \times m$ $(-1,+1)$-matrix, where $m = p + q$ is called the \textit{module} of $M_{i,j}$.
Then, $M_{i,j}$ is called the \textit{crossing matrix} of $D_{W_0}$ associated with $C_{i,j}$ if each row and each column of $M_{i,j}$ 
simultaneously contains $p$ consecutive symbol $+1$ followed by $q$ consecutive symbols $-1$, considering cyclic and countercyclical permutations of the rows and columns of the matrix. 
\end{definition}

\begin{remark}\label{rmk:3-3} 
To construct the crossing matrices associated to a fixed weaving motif $D_W$ of $D_{W_0}$ with $N \geq 2$ sets of threads, 
one must consider a flat torus cut from any preferred meridian-longitude pair, or equivalently fix any square unit cell of $D_{W_0}$ 
containing $m^2$ crossings and $m$ strands of each set. 
The strands $s_{k,i}$ of a set $T_i$ are oriented and indexed by a strictly positive integer $k$ from left to right and top to bottom, starting from the top left crossing.
Then, for any crossing sequence $C_{i,j}$ of $D_{W_0}$, one must use the following convention to fill the associated crossing matrix $M_{i,j} = (m_{x,y})_{0 \leq x, y \leq m-1}$,
 \begin{enumerate} 
    \item $m_{1,1} = +1$  (resp. $m_{1,1} = -1$) if the most top left crossing $c_{1,1} = s_{1,i} \cap s_{1,j}$ of $D_W$ 
    is such that the strand $s_{1,i}$ of $T_i$ is over (resp. under) the strand $s_{1,j}$ of $T_j$;
    \item fill the first row of the matrix by walking on $s_{1,i}$ such that the element $m_{1,k}$ gives the information of the crossing 
    $c_{1,k} = s_{1,i} \cap s_{k,j}$, with $k$ the index of the strand of $T_j$;
    \item fill the $k^{th}$ row of the matrix by walking on $s_{k,i}$, starting with the element $m_{k,1} = m_{1,k}$ given by the information 
    of the crossing $c_{k,1} = s_{k,i} \cap s_{1,j}$. Note that the $k^{th}$ column of the matrix is equal to its $k^{th}$ row.
 \end{enumerate}  
Finally, note that a cyclic or countercyclical permutation of the rows (resp. the columns) of such a crossing matrix, also called a translation of the design in \cite{ref11}, 
corresponds to a vertical (resp. horizontal) translation of the unit cell on $D_{W_0}$.
\end{remark}

%%%%%%%%%%%%%%%%%%%%%%%%%%%%%% FIGURE 8 %%%%%%%%%%%%%%%%%%%%%%%%%%%%%%
\begin{figure}[h]
\centering
   \includegraphics[scale=1]{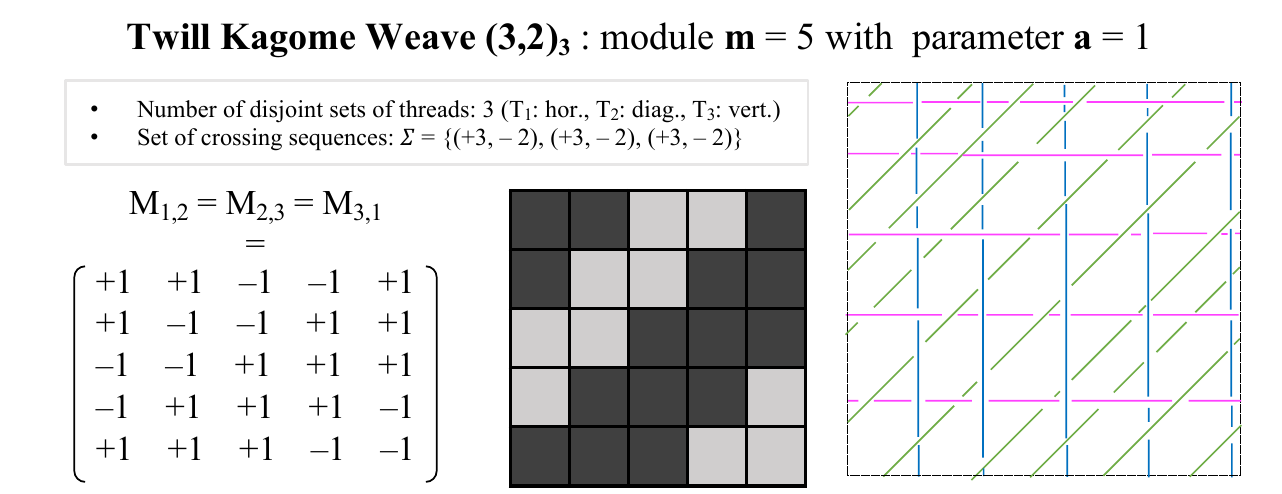}
      \caption{\label{fig8} Kagome Matrices}
\end{figure}
%%%%%%%%%%%%%%%%%%%%%%%%%%%%%% FIGURE 8%%%%%%%%%%%%%%%%%%%%%%%%%%%%%%

\begin{remark}\label{rmk:3-4} 
The information at the crossing between the threads $s_i$ and $s_j$ must be opposite on the two crossing matrices $M_{i,j}$ and $M_{j,i}$, 
describing the position of the two sets of threads considered, since if $s_i$ is over $s_j$ at a crossing, then the corresponding element of the matrix 
from the viewpoint of $T_i$ will be $+1$, while it will be $-1$ from the point of view of $T_j$, 
which explains why only one of the two matrices associated to a pair of sets of threads is enough to characterize our structures.
The crossing matrix $M_{j/i}$ is in fact directly deduced by transposing $M_{i/j}$ and changing the $+1$ values into $-1$ and conversely, $M_{j/i} = -(M_{i/j})^T$.
\end{remark}

As Lucas \cite{ref2}, we have attempted to define construction rules for crossing matrices of particular doubly periodic untwisted $(p,q)$-weaves, presented in the following examples. 
Note that the construction of a matrix describing a regular satin, reformulated with our notations, 
is also considered as an example for the general case of $N \geq 2$ sets of threads with at least a crossing sequence $(+p,-1)$. 
It would be interesting to enumerate all the possibilities to construct a crossing matrix, given any crossing sequence of type $(+p,-q)$ and to generalize to more complex periodic cases.

\begin{example}\label{ex:3-5} 
Consider the crossing matrix $M = (m_{x,y})_{0 \leq x, y \leq m-1}$ such that $x$ represents the column indices of the matrix and $y$ its row indices.  
Then, if $M$ is associated with a crossing sequence $(+p , -1)$ such that it satisfies Lucas' condition on regular satin \cite{ref2}, 
the symbols $-1$ can be positioned at the element $m_{x,y}$ satisfying the system,

     $ \left \{
         \begin{array}{r c l}
          x   & = & k, \mbox{ with } k \in {0, 1, \cdots , p+q-1} \\
          y   & = & a k \ mod (m), \mbox{ with } a < m \mbox{ fixed and } gcd(a,m)=1.
         \end{array}
        \right .$

In this case, the elements that do not satisfy this system are symbols $+1$.
\end{example}

\begin{example}\label{ex:3-6} 
Consider the crossing matrix $M = (m_{x,y})_{0 \leq x, y \leq m-1}$ such that $x$ represents the column indices of the matrix and $y$ its row indices.  
Then, if $M$ is associated with a crossing sequence $(+p , -p)$ such that at least two rows or two columns are equal,
the symbols $+1$ can be positioned at the element $m_{x,y}$ satisfying the system,
    
     $ \left \{
         \begin{array}{r c l}
          x_k   & = & k, \mbox{ with } k \in \{0, 1, ... , 2p-1\} \\
          y_{k,l}   & = & y_{k,l-1} \pm 1, \mbox{ if } k \neq p \ mod (m), \mbox{ or } \\
          y_{k,j} & = & y_{k,j-1} \pm p, \mbox{ otherwise, } j \in \{1, \cdots , p-1\}.
         \end{array}
        \right .$
            
    with the first column given by,
    
     $ \left \{
         \begin{array}{r c l}
          x_0   & = & 0 \\
          y_{0,0}   & = & 0  \mbox{ and } y_{0,j} = y_{0,j-1} \pm 1, j \in \{1, \cdots , p-1\}.
         \end{array}
        \right .\\$

In this case, the elements that do not satisfy this system are symbols $-1$.
\end{example}

\begin{example}\label{ex:3-7} 
Consider the crossing matrix $M = (m_{x,y})_{0 \leq x, y \leq m-1}$ such that $x$ represents the column indices of the matrix and $y$ its row indices.  
Then, if $M$ is associated with a crossing sequence $(+p , -q)$, with $p,q \neq 1$ and such that there are not two equal rows or columns,
the symbols $+1$ can be positioned at the element $m_{x,y}$ satisfying the system, 

     $ \left \{
         \begin{array}{r c l}
          x   & = & k, \mbox{ with } k \in {0, 1, \cdots , p+q-1} \\
          y   & = & a k \pm i \ mod (m), \mbox{ with } a =  \pm 1 \mbox{ fixed, } i \in \{1, \cdots , p-1\}.
         \end{array}
        \right .\\$
        
In this case, the elements that do not satisfy this system are symbols $-1$.
\end{example}

\begin{remark}\label{rmk:3-8} 
Note that the basket weave of Fig.~\ref{fig6} has a crossing matrix of rank $1$, while the twill weave has a crossing matrix of rank $2$, 
which confirms the non-equivalence of the two structures. This result can be generalize for any crossing matrix corresponding to a crossing sequence $(+p,-p)$, 
whose rank would be equal to $1$ if it is constructed as in Example 3.6, or equal to $p$ if it is constructed as in Example 3.7.
\end{remark}

The introduction of crossing matrices makes it possible to distinguish two weaving diagrams characterized by the same pair $(\Gamma,\Sigma)$, 
by assigning them a fixed sequence of crossing matrices $\Pi = \{M_{i,j} \setminus i, j \in \{1, \cdots , N\}\}$, with $M_{i,j}$ the crossing matrix defined above, 
and such that the sets of threads are identically indexed in order to compare strands from a same set on the two weaving motifs.
Therefore, a triple $(\Gamma,\Sigma,\Pi)$ allows to characterize the equivalence classes of doubly periodic untwisted $(p,q)$-weaving diagrams of $\mathbb{E}^2$, 
with respect to the generalized Reidemeister Theorem (Theorem 3.1). We can state and prove our main Theorem of this subsection.

\begin{theorem}\label{thm:3-9} \textbf{(Equivalence Classes of doubly periodic untwisted $(p,q)$-weaves)} 
Let $\mathrm{W}_1$ and $\mathrm{W}_2$ be two doubly periodic untwisted $(p,q)$-weaves with $N \geq 2$ sets of threads,
such that their corresponding regular projections are equivalent, up to isotopy of $\mathbb{E}^2$, and with the same set of crossing sequences.
Let $D_{W_1}$ and $D_{W_2}$ be two weaving motifs of same area of $\mathrm{W}_1$ and $\mathrm{W}_2$, respectively.
Then, $D_{W_1}$ and $D_{W_2}$ are equivalent if and only if their crossing matrices are pairwise equivalent, 
meaning that all the matrices of $D_{W_2}$ can simultaneously be obtained from the respective matrices of $D_{W_1}$ from at least one of two conditions,  
 \begin{itemize} 
    \item a same cyclic or countercyclical permutations of the rows and the columns;
    \item a same clockwise or counterclockwise rotation of $\pi$, or of $\frac{\pi}{2}$ together with an inversion of all its symbols.
 \end{itemize}         
\end{theorem}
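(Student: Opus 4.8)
The plan is to prove the two implications by building an explicit dictionary between the two listed matrix operations and the geometric moves on a weaving motif, and then reading the Reidemeister Theorem for Weaves (Theorem~\ref{th:3-1}) in both directions. First I would fix the common data: since the regular projections of $\mathrm{W}_1$ and $\mathrm{W}_2$ agree up to isotopy of $\mathbb{E}^2$ and the crossing sequences coincide, I identify both underlying graphs with a single doubly periodic quadrivalent graph $\Gamma$ and choose weaving motifs $D_{W_1}, D_{W_2}$ of equal area carrying a common indexing of strands, as in Remark~\ref{rmk:3-3}. Under this identification the \emph{entire} over/under data of $D_{W_k}$ is recorded by its family of crossing matrices $\Pi_k = \{M^{(k)}_{i,j}\}$, so any equivalence of the diagrams must be detectable from how $\Pi_1$ and $\Pi_2$ are related.

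The central step is the dictionary itself. By Remark~\ref{rmk:3-3}, a simultaneous cyclic or countercyclical permutation of the rows and columns of every matrix is exactly a translation of the fundamental domain, hence an isotopy on the torus. I would then verify the two rotations directly on a single crossing: a rotation by $\pi$ of the motif reverses the ordering of the strand indices while preserving the over/under status at each crossing, so it acts on each matrix as a rotation by $\pi$ with no change of symbols; a rotation by $\tfrac{\pi}{2}$ interchanges the horizontal and vertical roles of the two strands at every crossing, which flips the convention recording $+1$ versus $-1$ in Definition~\ref{def:3-2}, so it acts as a rotation by $\tfrac{\pi}{2}$ composed with the inversion $+1 \leftrightarrow -1$. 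All three operations are realized by isotopies of the torus preserving $\Gamma$.

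For the backward direction ($\Leftarrow$), if $\Pi_2$ is obtained from $\Pi_1$ by one of the two operations, the dictionary supplies a torus isotopy carrying $D_{W_1}$ to $D_{W_2}$; since torus isotopies are among the moves permitted by Theorem~\ref{th:3-1}, the weaves are ambient isotopic and the diagrams are equivalent. For the forward direction ($\Rightarrow$) I would run Theorem~\ref{th:3-1} in reverse and rule out, one by one, every permitted move that could change a crossing matrix: torus twists do not occur because both weaves are untwisted; an $\Omega_1$ move is impossible since threads of a weave never self-cross; an $\Omega_2$ move would alter the number of crossings between a pair of sets and thus contradict the fixed minimal crossing sequences of Definition~\ref{def:2-2} on motifs of equal area; and an $\Omega_3$ move leaves the over/under information at every crossing unchanged, hence fixes each $M_{i,j}$. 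What remains is a net torus isotopy preserving $\Gamma$, which by the dictionary acts on $\Pi_1$ precisely through the two listed operations, giving pairwise equivalence of the matrices.

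The main obstacle I anticipate is making the forward direction airtight at two points. First, I must argue that $\Omega_1$ and $\Omega_2$ contribute nothing: because the two motifs have equal area and realize the same minimal crossing sequences, any kink or poke inserted along the isotopy must be removed before a valid $(p,q)$-weaving motif is recovered, so its net effect on the matrices is trivial; this needs a careful bookkeeping of the crossing counts for each pair $(T_i,T_j)$. Second, I must pin down exactly which torus isotopies preserve the periodic graph $\Gamma$ and check that, under the indexing of Remark~\ref{rmk:3-3}, their induced action on the strand indices and on the over/under convention is generated by the cyclic or countercyclical permutations together with the rotations by $\pi$ and $\tfrac{\pi}{2}$ of the statement, i.e.\ identifying the relevant point-group symmetries of the underlying lattice and tracking their action on the matrices. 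Reflections must be excluded at this stage, which is consistent with the statement listing only rotations.
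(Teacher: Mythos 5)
There is a genuine gap, and it sits at the heart of your forward direction: you dismiss torus twists with the claim that ``torus twists do not occur because both weaves are untwisted.'' This conflates two unrelated notions of twist. The ``untwisted'' in the definition of an untwisted weave (Definitions~\ref{def:2-11} and~\ref{def:2-12}) refers to the absence of cut-and-glue twists between neighbouring parallel threads; a \emph{torus twist} in Theorem~\ref{th:3-1} is a move on torus diagrams, namely a modular transformation (a Dehn twist) of the torus, i.e.\ a change of the parallelogram unit cell of the doubly periodic diagram. Such moves are always available, for untwisted weaves as much as for any other, and they are precisely the moves that can nontrivially change a crossing matrix. The paper's own proof devotes the bulk of the forward direction to exactly this case: it fixes a lattice $\mathbb{Z}^2$, analyzes how cutting along different preferred meridian-longitude pairs $(\mu',\lambda)$ and $(\mu,\lambda')$ permutes rows and columns cyclically or countercyclically, notes that exchanging $(\mu,\lambda)$ for $(\lambda,\mu)$ is the $\frac{\pi}{2}$ rotation of the motif, and then argues that since a torus twist preserves the fixed lattice and unit cells of equal area carry equal numbers of crossings, two motifs related by torus twists are two parallelogram unit cells of one and the same infinite diagram, whose crossing matrices therefore differ only by the allowed permutations (via Remark~\ref{rmk:3-3}). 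By excising this case you have removed the step that makes the forward implication nontrivial: what you call ``a net torus isotopy preserving $\Gamma$'' does not account for the mapping-class-group part of the equivalence in Theorem~\ref{th:3-1}, so your argument does not establish that equivalent motifs have pairwise equivalent matrices.

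The remaining differences are minor. Where you rule out $\Omega_1$ by ``threads never self-cross'' and $\Omega_2$ by a crossing-count/minimality argument (which you yourself flag as needing careful bookkeeping), the paper takes the shorter route of assuming without loss of generality that the diagrams are geodesic, so that $\Omega_1$ and $\Omega_2$ cannot occur by definition; your treatment of $\Omega_3$ (it involves three distinct sets of threads, while each matrix records only a pair, hence no matrix changes) and your dictionary for translations and the two rotations agree with the paper's reverse direction. To repair the proof you must restore the torus-twist analysis rather than argue it away.
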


\begin{proof}
First, we prove that the equivalence of the weaving motifs implies the equivalence of the crossing matrices, using Theorem 3.1.
We start by studying the invariance by the Reidemeister moves.
Without loss of generality, we assume that our weaving diagrams are geodesic. 
Then, by definition, Reidemeister moves of types $\Omega_1$ and $\Omega_2$ do not occur. 
For a Reidemeister move of type $\Omega_3$, we consider three threads from three different sets.
However, recall that each crossing matrix is obtained from the crossing information of a pair of sets of threads only. 
This means that an $\Omega_3$ move does not change any of the crossing matrices. 
Now, it suffice to show the invariance of the matrices of a weaving motif $D_W$ for the torus twists. 
We fix a lattice $\mathbb{Z}^2$ in $\mathbb{E}^2$ where the infinite weaving diagrams are embedded.
Let $p$ be the intersection point of preferred meridian-longitude pair $(\mu ,\lambda)$ on $D_W$, such that a flat weaving motif is obtained by cutting along this pair. 
Notice at this point that if we reverse the meridian with the longitude, meaning that we replace the pair $(\mu ,\lambda)$ into $(\lambda,\mu)$, 
then the cut along this reversed pair is realized by rotating the original weaving motif in $\mathbb{E}^2$ by $\frac{\pi}{2}$.
Let $p' \neq p$ be a point on the longitude $\lambda$ and $p'' \neq p$ be a point on the meridian $\mu$ of $D_W$. 
Then, we obtain new preferred meridian-longitude pairs $(\mu ,\lambda')$ and $(\mu' ,\lambda)$ on $D_W$. 
Cutting $D_W$ along $(\mu' ,\lambda)$ instead of $(\mu ,\lambda)$ corresponds to a cyclical or countercyclical permutations of the rows of the crossing matrices of $D_W$.
Similarly, cutting $D_W$ along $(\mu ,\lambda')$ instead of $(\mu ,\lambda)$ corresponds to a cyclical or countercyclical permutations of the colums of the crossing matrices of $D_W$.
In particular, these transformations correspond to a vertical and horizontal translation of the unit cell in the infinite weaving diagram respectively.
Besides, recall that a torus twist represents a modular transformation which preserves the fixed lattice \cite{ref3}, 
and that two unit cells of same area of an infinite weaving diagram have the same number of crossings \cite{ref4}. 
Therefore, two weaving motifs related by a sequence of torus twists would correspond to two distinct parallelogram unit cells of the same infinite diagram. 
We can conclude using Remark 3.3 and the definition of the crossing sequence of a $(p,q)$-weaves, that the crossing matrices of two such diagrams 
are equivalent up to a sequence a cyclical or countercyclical permutations of the rows and columns, which conclude the first part of our proof.
Then, to prove the reverse implication, we start from a set of crossing matrices to which we apply a same transformation.
If we apply a same cyclic or countercyclical permutation of the rows and the columns applied simultaneously to all the crossing matrices associated to a weaving motif, 
we have seen above that it corresponds to a translation of the periodic unit cell in the infinite diagram, which implies the equivalence of the corresponding weaving motifs.  
Furthermore, if we apply to the given set of crossing matrices a same clockwise or counterclockwise rotation of $\pi$, or of $\frac{\pi}{2}$ together with an inversion of all its symbols, 
then we would obtain pairwise equivalent matrices, up to cyclic or countercyclical permutations of the rows and the columns, from which we can construct once again equivalent weaving motifs. 
\end{proof}

\begin{remark}\label{rmk:3-10} 
Note that if one of the crossing matrix of $D_{W_2}$ has a different permutation or rotation than the others matrices, then $D_{W_1}$ and $D_{W_2}$ are not equivalent.
Indeed, there would exist a thread $t_i$ of $D_{W_2}$ for which the order of all its crossings will be different than its representative in $D_{W_1}$. 
This means that there would exist a crossing $c_{i,j},$ in $D_{W_2}$, with $T_i$ and $T_j$ two sets of threads, such that by walking on the thread $t_i$, 
one of the nearest neighboring crossing $c_{i,k},$ of $c_{i,j},$, with a different set of threads $T_k$ will have a different type than the corresponding one in $D_{W_1}$. 
Thus, the two weaving diagrams cannot be superimposed and are therefore not equivalent, as in Fig.~\ref{fig9}.
\end{remark}

%%%%%%%%%%%%%%%%%%%%%%%%%%%%%% FIGURE 9 %%%%%%%%%%%%%%%%%%%%%%%%%%%%%%
\begin{figure}[h]
\centering
   \includegraphics[scale=1.2]{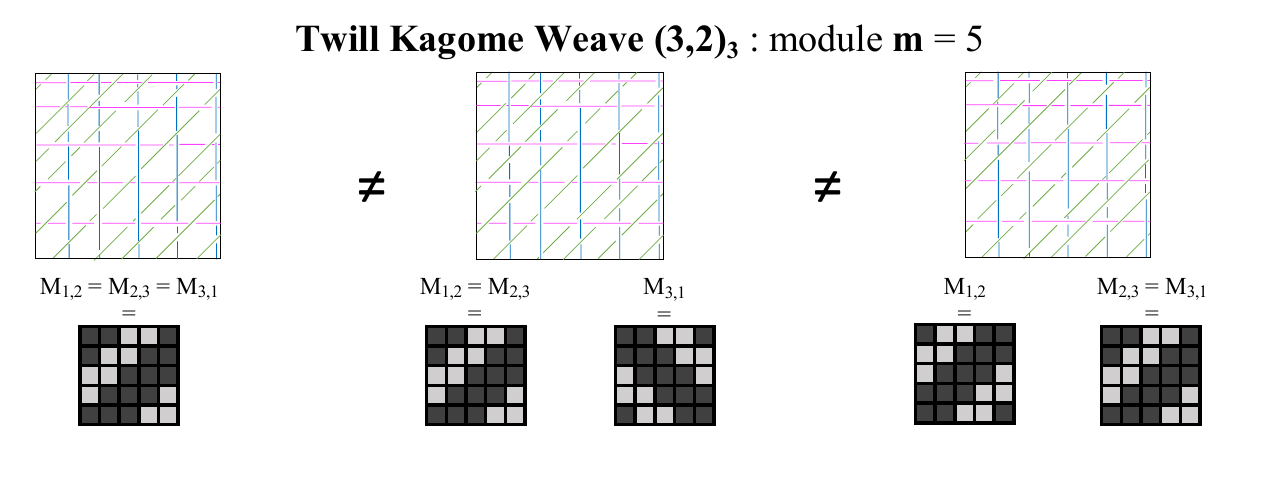}
      \caption{\label{fig9} Non Equivalent Kagome Matrices}
\end{figure}
%%%%%%%%%%%%%%%%%%%%%%%%%%%%%% FIGURE 9 %%%%%%%%%%%%%%%%%%%%%%%%%%%%%%

One of the other great interests of these crossing matrices for the study of equivalence classes of weaving diagrams 
is that by defining any arbitrary weaving diagram by a triple $(\Gamma,\Sigma,\Pi)$, we can find non equivalent structures with the same pair $(\Gamma,\Sigma)$ 
just by doing a permutation or rotation of one or two of the matrices of $\Pi$, with respect the related crossing sequences, as described in Theorem 3.9. 
This is very useful for the construction an classification of our diagrams.\\

%%%%%%%%%%%%%%%%%%%%%%%%%%%%%%%%%%%%%%%%%%%%%%%%%%%%%%%%%%%%%%%%%%%
\subsection{Crossing number of doubly periodic untwisted $(p,q)$-weaves}\label{sec:3-2}

~

~

By definition of the projection $W_0$ of a doubly periodic untwisted $(p,q)$-weave $\mathrm{W}$ and of a crossing, 
we notice that $W_0$ is isotopic to the skeleton $\Gamma$ of a particular type of periodic topological quadrivalent tiling by convex polygons, that we call a  \textit{thread-tiling}.

\begin{definition}\label{def:3-11} 
A periodic \textit{thread-tiling}, composed of $N \geq 2$ sets of threads, is a planar edge-to-edge quadrivalent topological tiling 
by convex polygons, such that each edge of these polygons belongs to a single thread and two adjacent edges belong to two threads of different sets. 
\end{definition}

Now our objective is to characterize a doubly periodic thread-tiling using combinatorial arguments, 
which would be useful for the classification of our weaves by their crossing number, 
meaning the minimal number of crossing that can be found on an associated weaving motif. 
We call such a diagram a \textit{minimal diagram}.
As mentioned in \cite{ref4}, this is one of the main challenges for the classification of doubly periodic entangled structures, due to the many possibilities of choosing a unit cell. 
Our strategy consists in characterizing a doubly periodic thread-tiling on a periodic unit cell by a set $\Sigma'$ of essential simple 
closed curves embedded on a torus, each representing a set of threads, and generating the targeted graph $\Gamma$.
Recall that such a curve can be described by a pair of coprime integers $(a,b)$ \cite{ref3}, which embeds the slope of the geodesic thread in the Euclidean plane. 
Thus, considering that the couples $(a,b)$ and $(-a,-b)$ represent threads belonging to a same set, and therefore equivalent, as well as the pairs $(-a,b)$ and $(a,-b)$, 
we use the convention $(a,b)$ and $(-a,b)$ for these two cases, respectively.
Moreover, notice that the number of intersections $|v|$ between two of these curves $(a,b)$ and $(a',b')$ is given by the \textit{geometric intersection number} \cite{ref3}, 
defined by the equation $|a \ b' - a' \ b| = |v|$, which encodes the periodicity of the structure after translation in both directions.
This number is defined for free homotopy classes of simple closed curves and is the minimum number of intersections between a representative of each set of threads. 
Note that a same thread-tiling can be characterized by non-equivalent sets of curves, and the idea is to find a periodic unit cell with the fewest vertices. 
Moreover, a weave can admit different thread-tilings for projection by isotopy, if it admits a Reidemeister moves of type $\Omega_3$  for example. 
We believe that the classification of these doubly periodic thread-tilings in terms of sets of essential simple closed curves on a unit cell is an interesting problem, 
and we assume that for a given number of set of threads, a periodic unit cell of the tiling containing curves given by pairs of coprimes minimized in absolute value will have the fewest vertices.

To study the crossing number of doubly periodic untwisted $(p,q)$-weave, recall that each element of the pair $(\Sigma',\Sigma)$ related to a given set of threads 
indicates the periodic behaviors of every thread of this set. 
This means that for each pair of disjoint sets of threads, we can deduce from the associated geometric intersection number of the 
two representative curves and the crossing sequence the minimum number of crossings required to ensure the periodicity associated with these two sets.

\begin{lemma}\label{lem:3-12} \textbf{(Pairwise crossing number)}
Let $D_{W_0}$ be a weaving diagram with $N$ sets of threads characterizes by the pair $(\Sigma',\Sigma)$. 
Let $i, j \in \{1, \cdots , N\}$ be distinct integers and $T_i$, $T_j$ be two sets of threads of $D_{W_0}$. 
Then, to ensure the periodicity on a weaving motif of $D_{W_0}$, the minimum number of necessary crossings $c_{i,j}$ on a thread $t_i \in T_i$ and $t_j \in T_j$ is given by,
$$\mathcal{C}_{i,j} = lcm ( \zeta^i_{i,j} ;  \zeta^j_{i,j})$$
where $\zeta^i_{i,j} = |v_{i,j}| \times lcm(\{|C_{i,k}| \ / \ k \in \{1, \cdots , N\}\})$, with $|C_{i,k}|$ the sum of the two positive integers of the crossing sequence $C_{i,k}$, for all $k \in \{1, \cdots , N\}$.
We call $\mathcal{C}_{i,j}$ the $(i,j)$-pairwise crossing number. 
\end{lemma}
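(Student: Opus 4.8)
The plan is to isolate the two periodicity constraints that a single weaving motif must satisfy — one coming from the thread $t_i$ and one from $t_j$ — and to show that each forces a divisibility condition on the number of $(i,j)$-crossings, so that the minimum is their least common multiple. First I would pass to the universal cover, where $t_i$ is a straight geodesic and each set $T_k$ is a family of equally spaced parallel geodesics. By the definition of a $(p,q)$-weave (Definition~\ref{def:1-1}), the over/under word that $t_i$ reads off against $T_k$ is cyclic of length $|C_{i,k}| = p_k + q_k$, while the torus model shows that $t_i$ meets a representative of $T_k$ exactly $|v_{i,k}| = |a_i b_k - a_k b_i|$ times per fundamental domain. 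The first key step is to rephrase ``the motif closes up the pattern of $t_i$ with $T_k$'' as the arithmetic condition that, if the motif spans $L$ fundamental domains along $t_i$, then $|C_{i,k}|$ must divide $L\,|v_{i,k}|$.

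Next I would solve this system of divisibilities over all $k$. Imposing $|C_{i,k}| \mid L\,|v_{i,k}|$ simultaneously forces $L$ to be a common multiple of the periods $|C_{i,k}|$; using the coprimality of the slope pairs together with the entanglement requirement of Proposition~\ref{prop:2-10} (a pair of crossing sets must contribute both an over- and an under-crossing) to control the shared factors, the minimal admissible $L$ is $\mathrm{lcm}(\{|C_{i,k}| : k\})$. Substituting this back into the crossing count against the distinguished set $T_j$ gives exactly $\zeta^i_{i,j} = |v_{i,j}|\,\mathrm{lcm}(\{|C_{i,k}|\})$. Repeating the argument verbatim after swapping the roles of $i$ and $j$, and using $|v_{j,i}| = |v_{i,j}|$, produces the companion number $\zeta^j_{i,j}$ governing the closure of the patterns of $t_j$.

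To conclude, I would note that a single motif must close up $t_i$ and $t_j$ at the same time, so the number of $(i,j)$-crossings it contains is forced to be a common multiple of $\zeta^i_{i,j}$ and $\zeta^j_{i,j}$; the smallest such integer is $\mathrm{lcm}(\zeta^i_{i,j}, \zeta^j_{i,j})$, which is the claimed value of $\mathcal{C}_{i,j}$. Necessity is immediate from the two divisibility conditions, and for sufficiency I would exhibit a concrete unit cell — cut out so that it spans the required number of fundamental domains and carries $\mathrm{lcm}(\{|C_{i,k}|\})$ strands of each set, consistently with the module count of Remark~\ref{rmk:3-3} — on which the pattern visibly repeats.

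The step I expect to be the main obstacle is the minimality in the second paragraph: proving that $L$ cannot be replaced by a proper divisor of $\mathrm{lcm}(\{|C_{i,k}|\})$. Whenever $\gcd(|v_{i,k}|, |C_{i,k}|) > 1$ a strictly smaller unit cell can in principle already realize the cyclic word, so one must argue carefully — via an orbit-counting bookkeeping of \emph{which} strand of $T_k$ is crossed at each step of the cyclic pattern — that the full least common multiple is genuinely needed, or else make explicit the coprimality hypothesis under which the stated formula is tight. This is precisely the arithmetic interaction that the even multipliers $k_l$ in the global system $(\mathcal{S}_{min})$ of Theorem~\ref{thm:1-2} are later introduced to absorb.
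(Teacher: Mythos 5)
Your proposal takes essentially the same route as the paper: the paper's proof is an informal version of your first three paragraphs, namely take the geometric intersection number $|v_{i,j}|$ as the per-cell crossing count, multiply by $\mathrm{lcm}(\{|C_{i,k}|\})$ (respectively $\mathrm{lcm}(\{|C_{j,k}|\})$) because each thread must read all of its crossing sequences periodically, and finally take the lcm of $\zeta^i_{i,j}$ and $\zeta^j_{i,j}$ to synchronize the closure of $t_i$ and $t_j$ in a single motif. The one point worth recording is that the obstacle you flag in your last paragraph --- that the divisibility $|C_{i,k}| \mid L\,|v_{i,k}|$ only forces $L$ to be a multiple of $|C_{i,k}|/\gcd(|C_{i,k}|,|v_{i,k}|)$, not of $|C_{i,k}|$ itself --- is genuine, and the paper's proof does not address it: it simply asserts that the module, and then the lcm of the modules, is ``minimal by definition.'' So your attempt reproduces the paper's argument and is, if anything, more explicit about where its minimality claim remains unjustified.
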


\begin{proof}
Let $s_i \in T_i$ and $s_j \in T_j$ be the two curves representative of a unit cell of the thread-tiling corresponding to $D_{W_0}$.
Then, to ensure the periodicity of the crossing sequence $C_{i,j}$ in a weaving motif of $D_{W_0}$
we must first consider the minimum number of vertices $v_{i,j}$ which is necessary on $s_i$ and $s_j$ to satisfy the definition of the thread-tiling given by $\Sigma'$. 
This number $|v_{i,j}|$ is the geometric intersection number between $s_i$ and $s_j$ by definition.
Then, we must multiply this number by the minimum number of crossings necessary to read the crossing sequence $C_{i,j} = (+p,-q)$, 
which is its module $m = |C_{i,j}| = p + q$, that is also minimal by definition.
However, on a strand $s_i$ (resp. $s_j$) there are not only crossings of type $c_{i,j}$ when $N > 2$, we must therefore also consider the minimum number of necessary crossings 
of type $c_{i,k}$ on $s_i$ (resp. $c_{j,k}$ on $s_j$) to read the other crossing sequences $C_{i,k}$ (resp. $C_{j,k}$) and ensure the periodicity.
We must therefore consider the least common multiple of their modules $|C_{i,k}|$ (resp. $|C_{j,k}|$) as multiplier of $|v_{i,j}|$.
Finally, the global minimality is ensured by taking the least common multiple of these two products, 
considering that we must obtain the same number of crossings on each thread of $T_i$ and $T_j$ on the weaving motif.
\end{proof}

\ ~
 
To fulfill the definition of a weaving motif, each $(i,j)$-pairwise crossing number $\mathcal{C}_{i,j}$ represent the number of crossing that must belong to the same representative 
simple closed curves for each set $T_i$ and $T_j$, unless these sets do not cross. In this case, at least one of the two curves must have $\mathcal{C}_{i,j}$ crossings.
However, this condition must be satisfied for all pairs of sets of threads in the weaving motifs, considering that each representative curve of a set of thread is 
characterized by a pair of coprime integers, which must not be equivalent to the pair representing another set. 
This leads to solving a system of equations in which one can simultaneously satisfy the geometric intersection number equations 
for each pair of disjoint sets of threads, as stated in our second main theorem.
Note that the possibilities of choosing the integers are restricted and that we might need more than one representative curves for a set of threads in some cases.

\begin{theorem}\label{thm:3-13} \textbf{(Total Crossing Number)}
Let $i, j \in \{1, \cdots , N\}$ distinct and $\mathcal{C}_{i,j}$ be the $(i,j)$-pairwise crossing numbers of a weaving diagram 
$D_{W_0}$ with $N$ sets of threads, characterized by the pair $(\Sigma',\Sigma)$. 
Let $(\mathcal{S}_{min})$, be the system of geometric intersection number equations, defined for integers $a_i$ and $b_i$, 
either coprime or such that one of them equal zero,  
satisfying for each equation that we can multiply both parts by a same even number $k_l$ 
if the two sets of threads implied on the equation cross ($k_l = 1$ otherwise), 
with $l \in \{1, \cdots , \frac{N (N - 1)}{2}\}$ being the index of the equation in the system.\\

 $ (\mathcal{S}_{min}) \left \{
   \begin{array}{r c l}
      \ k_1 \times |a_1 \ b_2 - a_2 \ b_1|   & = & k_1 \times \mathcal{C}_{1,2} \\
      . \\
      . \\
      . \\
      k_l \times |a_i \ b_j - a_j \ b_i|   & = & k_l \times \mathcal{C}_{i,j} \\
      . \\
      . \\
      .
   \end{array}
   \right .$
   
 ~ \\

Then, from the solution of $(\mathcal{S}_{min})$ with smallest multipliers $k_{l}$ which minimizes each integers $a_m$ and $b_m$ in absolute value, 
and such that every two pairs $(a_m , b_m)$ are distinct, we can deduce the total crossing number of $D_{W_0}$ given by,
$$ \mathcal{C} = \sum_{i<j = 1}^{N} \mathcal{C'}_{i,j} $$ 
with each $\mathcal{C'}_{i,j} = k_l \ \mathcal{C}_{i,j}$ in $(\mathcal{S}_{min})$.
\end{theorem}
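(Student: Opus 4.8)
The plan is to establish the formula by matching a lower bound and an upper bound on the number of crossings in a minimal weaving motif, exploiting the fact that every crossing of a weaving diagram involves exactly two sets of threads. First I would observe that a crossing $c = \pi(t_i) \cap \pi(t_j)$ belongs to exactly one unordered pair $(T_i, T_j)$, so the crossings of any weaving motif of $D_{W_0}$ partition according to the $\binom{N}{2}$ pairs of sets. For each pair, Lemma 3.12 already supplies the minimal count $\mathcal{C}_{i,j}$ of crossings that must lie on a single pair of representative curves $s_i, s_j$ in order to close up the crossing sequence $C_{i,j}$ together with all the other sequences $C_{i,k}$ and $C_{j,k}$ that $s_i$ and $s_j$ meet. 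Summing these partial counts is what produces the right-hand side $\sum_{i<j} \mathcal{C}'_{i,j}$; the whole difficulty is to show that these per-pair minima can, and must, be realized simultaneously by one coherent choice of curves on the torus.

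For the simultaneous realization I would pass to the description of each set of threads $T_i$ by an essential simple closed curve of coprime slope $(a_i, b_i)$, as set up before the statement. Then the number of crossings carried by the pair $(T_i, T_j)$ on a fundamental cell equals the geometric intersection number $|a_i b_j - a_j b_i|$, and demanding that this equal the pairwise minimum $\mathcal{C}_{i,j}$ for every pair is exactly the system $(\mathcal{S}_{min})$ with all $k_l = 1$. If this system admits a solution in coprime (or degenerate) pairs that are moreover pairwise distinct, so that no two sets are forced onto the same slope, then the minimal diagram has precisely $\sum_{i<j} \mathcal{C}_{i,j}$ crossings and the theorem holds with every $k_l = 1$. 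The role of the multipliers $k_l$ is to handle the generic situation where no such common solution exists: replacing the equation for a crossing pair by $k_l |a_i b_j - a_j b_i| = k_l \mathcal{C}_{i,j}$ corresponds geometrically to taking $k_l$ parallel representative curves, equivalently enlarging the unit cell in that direction, which reconciles the incompatible divisibility and distinctness constraints at the cost of $k_l \mathcal{C}_{i,j}$ crossings for that pair.

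To pin down minimality I would argue in two directions. For the upper bound, any tuple of smallest multipliers solving $(\mathcal{S}_{min})$ yields, by the geometric intersection formula and Lemma 3.12, an explicit weaving motif whose crossings number exactly $\sum_{i<j} k_l \mathcal{C}_{i,j}$; here the evenness of $k_l$ for crossing pairs is forced by Proposition 2.9, since closing up an alternating over/under pattern of a crossing sequence $(+p,-q)$ and its complementary $(+q,-p)$ around the torus without a frustrated blocking inconsistency requires an even number of copies, whereas for a non-crossing pair, with sequence $(+1,0)$ or $(0,-1)$, a single copy suffices and $k_l = 1$. For the lower bound I would show conversely that any weaving motif of $D_{W_0}$ determines, after cutting along a meridian--longitude pair and reading off the slopes of its curves, a solution of $(\mathcal{S}_{min})$ with some admissible multipliers; since its crossings split over pairs and each pair already carries a multiple of $\mathcal{C}_{i,j}$ by Lemma 3.12, its total is at least $\sum_{i<j} k_l \mathcal{C}_{i,j}$ for those multipliers, hence at least the value attained by the smallest admissible ones.

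The step I expect to be the genuine obstacle is the lower bound coupled with the minimality of the multipliers: one must rule out that some more economical configuration, using several representative curves of unequal multiplicities or exploiting numerical coincidences among the $\mathcal{C}_{i,j}$, realizes the weave with fewer crossings than the smallest-multiplier solution of $(\mathcal{S}_{min})$. Making this precise requires a careful number-theoretic analysis of when the coprimality and pairwise-distinctness conditions on the slopes $(a_i, b_i)$ can be met after the minimal even scalings, together with a proof that no crossing is ever shared between two pairs, so that the per-pair minima genuinely add. I would isolate this as the core combinatorial lemma and dispatch the remaining invariance, namely that the count does not depend on the chosen unit cell, by appeal to Theorem 3.1 and to the fact, already used in the proof of Theorem 3.9, that two unit cells of equal area carry the same number of crossings.
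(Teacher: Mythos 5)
Your proposal follows essentially the same route as the paper's proof: reduce to the per-pair minima $\mathcal{C}_{i,j}$ of Lemma 3.12, encode each set of threads by a slope pair $(a_i,b_i)$ so that the pair $(T_i,T_j)$ contributes $|a_i\, b_j - a_j\, b_i|$ crossings on the motif, impose all of these equalities simultaneously as the system $(\mathcal{S}_{min})$, and interpret the multipliers $k_l$ as taking parallel copies of representative curves (the paper makes this same interpretation, though in Remark 3.14 rather than in the proof itself). However, your write-up is in fact more demanding than the published proof. The paper's argument occupies only a few lines: it asserts that the geometric intersection number realizes the per-pair minimum, states that the four integers are chosen with minimal absolute value ``by convention,'' notes that the conditions must hold simultaneously for all pairs, and concludes that one must solve $(\mathcal{S}_{min})$. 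It contains no upper-bound/lower-bound dichotomy, no argument that an arbitrary weaving motif yields a solution of $(\mathcal{S}_{min})$ with admissible multipliers, no justification of the evenness requirement on the $k_l$ (your attempted derivation of it from Proposition 2.9 is your own invention; the paper merely stipulates evenness in the theorem statement), and no argument ruling out cheaper configurations built from representative curves of unequal multiplicities. In other words, the ``core combinatorial lemma'' you isolate as the genuine obstacle --- that the per-pair minima genuinely add and that the smallest-multiplier solution cannot be undercut --- is exactly what the paper's proof takes for granted. So your plan is consistent with the paper's approach, and if you completed the step you flagged you would obtain a proof strictly stronger than the published one; as it stands, both your proposal and the paper leave that same step open.
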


\begin{proof}
A unit cell of a periodic weaving diagram is an embedding of essential simple closed curves crossing on the surface of a torus. 
Thus, the geometric number of intersection between a simple closed curve of a set of thread $T_i$ and a simple closed curve of a set $T_j$ 
defines the minimum number of crossings between these two curves on a weaving motif, given by the $(i,j)$-pairwise crossing number $\mathcal{C}_{i,j}$. 
From Lemma 3.12, we therefore have to find two pairs integers, either coprime or such that one of them equal zero,  
whose geometric number of intersection is $\mathcal{C}_{i,j}$, for each pair $(i,j)$.
$$|a_i \ b_j - a_j \ b_i| = \mathcal{C}_{i,j}$$
These four integers are chosen by convention such that their absolute value is minimal.
Moreover, since this condition must be satisfied simultaneously for all distinct pairs of sets of threads $(T_i,T_j)$, 
we conclude that to find the minimal number of crossings on the weaving motif, 
we have to solve the system of equations $(\mathcal{S}_{min})$ of the theorem statement.
\end{proof}

\ ~

\begin{remark}\label{rem:3-14} 
Considering that two periodic weaving motifs with same area contain the same number of crossings, 
we can construct a minimal diagram on a square unit cell given the fact that  $(|a| + |b|)$ parallel segments that do not intersect any corner of the square, 
correspond to a $(a,b)$-simple closed curve on the torus after identification of the opposite sides of the square.
Indeed, by taking $k_l$ (or $\frac{k_l}{2}$ if $k_l$ is an even number divided to be distributed for the four integers of the geometric intersection number equation) 
simple closed curves of same slope for each set of threads $T_i$, represented by the pairs $(a_i, b_i)$ and satisfying Theorem 3.13, 
we can construct a minimal diagram on a square unit cell, whose associated infinite weaving diagrams can be built from the pair $(\Sigma',\Sigma)$, up to isotopy.
\end{remark}

The classification tables of Section 3.3 show examples of minimal diagrams for some simple cases of weaving diagrams.
However, as discussed in the previous subsection, non-equivalent weaving diagrams can be constructed from a same pair $(\Sigma',\Sigma)$. 
At this point, we can conclude about their crossing number and their minimal diagram. 

\begin{remark}\label{rek:3-15} 
The total crossing number of a doubly periodic untwisted $(p,q)$-weave is given by the following.
 \begin{itemize} 
    \item If a set of crossing-matrices, corresponding to a minimal diagram constructed by Remark 3.14 is unique up to equivalence, then the associated weaving diagram 
    built from the pair $(\Sigma',\Sigma)$ is unique and has its crossing number given by Theorem 3.13.
    \item If a non-equivalent set of crossing matrices is found, it means that there exist at least two non-equivalent weaving diagrams generated from a same couple $(\Sigma',\Sigma)$.
    One of these diagrams has its crossing number given by Theorem 3.13 and a minimal unit cell constructed using Remark 3.14.
    The crossing numbers of the other non-equivalent weaving diagrams are given by solving the system $(\mathcal{S}_{min})$ of Theorem 3.13 with the next smallest solutions, 
    such that the corresponding weaving diagrams generated by Remark 3.14 have associated crossing-matrices that are not equivalent to the ones given for the previous smallest solutions of $(\mathcal{S}_{min})$.
 \end{itemize}    
\end{remark}

We can illustrate this method with the same example of basket and twill square (2,2) weaving diagrams, see also Figure 8. 

\begin{example}\label{ex:3-16} 
The minimal diagram for the twill case is obtained with one representative simple closed curve for each set of thread.
Recall that each of these curves must have four crossings, two over and two under, so its crossing number is $\mathcal{C}=4$.
This can be done with a $(2,1)$-curves for one set and a $(-2,1)$-curves for the other set.
For the basket case, we find that $k_{1,2} = 2$ is the next smallest solution of $(\mathcal{S}_{min})$, with the
four integers satisfying this relation and such that their absolute value is minimal, given by 
two $(1,1)$-curves for one set and two $(-1,1)$-curves for the other set.
Finally since we can organize the crossings on an associated unit cell with eight crossings, 
such that the corresponding crossing matrix is not equivalent to the one of the twill case,
we can confirm that the crossing number of the plain square (2,2) weaving diagram is eight.
\end{example}

A sub-classification for each pair of regular projection and set of crossing sequences by solving the system $(\mathcal{S}_{min})$ in Theorem 3.13, 
with different values for the multipliers $k_{l}$, starting from the smallest possible, and by studying the equivalence class of the set of crossing matrices 
can also be done for a better classification.\\

We end this paper by studying the invariance of the number of blocking crossings in equivalent minimal diagrams.
By definition of a blocking crossing (Definition 2.9) in the case of a weave with $N \geq 3$ sets of threads, 
we notice that it locally implies the existence of \textit{alternating triangles}, or \textit{A-triangle}, in the weaving diagram. 
Such an A-triangle is locally defined by three threads $t_i \in T_i$, $t_j \in T_j$ and $t_k \in T_k$, with $i, j, k \in \{1, \cdots , N\}$ distinct, 
and three closest neighboring crossings of different types, such that $t_i$ is over $t_j$ and under $t_k$, and $t_j$ is over $t_k$, or conversely up to the labels (see Fig. 3).
Recall that two minimal diagrams of same area of a given weave have the same number of crossings. 
Then, we can prove the following Proposition with the same strategy as Theorem 3.9.

\begin{proposition} [The A-triangles Number]\label{prop:3-16} 
Let $D_{W_1}$ and $D_{W_2}$ be two minimal weaving motifs of doubly periodic untwisted $(p,q)$-weaves with $N \geq 3$ sets of threads. 
Then, they have the same number of A-triangles if and only if every Reidemeister moves of type $\Omega_3$ that occur at a crossing belonging to a A-triangle 
cross the complete A-triangle.
\end{proposition}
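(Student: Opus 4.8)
The plan is to follow the architecture of the proof of Theorem~\ref{thm:3-9}: invoke the Reidemeister Theorem for weaves (Theorem~\ref{th:3-1}) to express the passage from $D_{W_1}$ to $D_{W_2}$ as a finite sequence of admissible moves, and then track the effect of each move on the number of A-triangles. As in Theorem~\ref{thm:3-9}, I would first assume both minimal motifs are geodesic, so that moves of type $\Omega_1$ and $\Omega_2$ do not occur; since $D_{W_1}$ and $D_{W_2}$ are minimal motifs of the same area they carry the same number of crossings, so the relating sequence consists only of $\Omega_3$ moves, torus twists, and isotopies on the torus. Exactly as in Theorem~\ref{thm:3-9}, torus twists and surface isotopies amount to choosing a different unit cell, hence to cyclic or countercyclical permutations of the strands; by double periodicity these leave the number of A-triangles per unit cell unchanged. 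Thus the entire question reduces to the behaviour of the A-triangle count under a single $\Omega_3$ move.

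The crucial local observation is that an $\Omega_3$ move preserves the over/under datum at each of the three crossings of its own triangle, merely transporting that triangle to the opposite side of the pivot crossing. Since whether three pairwise crossings form an A-triangle is determined solely by their alternating over/under pattern (Definition~\ref{def:2-9} and Fig.~\ref{fig3}), an $\Omega_3$ move can never turn its own three threads into, or out of, an A-triangle: an A-triangle is precisely the blocking, non-$\Omega_3$-admissible pattern. Consequently the A-triangle count can change only through the change of adjacency that the move induces, namely when the strand being slid enters or leaves the triangular region delimited by an existing A-triangle $(t_i,t_j,t_k)$, thereby creating or destroying the ``three closest neighbouring crossings'' condition. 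This is exactly the dichotomy encoded in the statement: if the slid strand sweeps across the complete A-triangle (entering on one side and exiting on the other, meeting all three of $t_i,t_j,t_k$), the three crossings $t_i\cap t_j$, $t_i\cap t_k$, $t_j\cap t_k$ remain mutually closest neighbours and the A-triangle survives intact, merely translated; if instead the strand crosses the region only partially, it comes to rest inside the triangle, splitting the adjacency and changing the count by one.

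With this dichotomy in hand I would assemble both implications. For the direction asserting that complete crossings force equal counts, I would note that each $\Omega_3$ move crossing a complete A-triangle preserves the count, each $\Omega_3$ move disjoint from every A-triangle trivially preserves it, and the torus-twist moves preserve it by the first paragraph; summing over the sequence, $D_{W_1}$ and $D_{W_2}$ carry the same number of A-triangles. For the converse I would argue by contraposition: if some move in the sequence meets an A-triangle without crossing it completely, then by the dichotomy it strictly alters the count, and one must show this alteration is not silently undone elsewhere in the sequence.

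The main obstacle I anticipate is precisely this non-cancellation step in the converse. Ruling it out requires a bookkeeping argument showing that, in a minimal diagram, an incomplete $\Omega_3$ crossing produces a net change of one in the A-triangle count that cannot be reabsorbed. I would handle this by localising to the triangular region of the affected A-triangle and invoking minimality in the same role it plays above in forbidding $\Omega_1$ and $\Omega_2$: there is no slack available to reintroduce a destroyed blocking crossing or to remove a created one without raising the crossing number, so the change survives and the two motifs end with different A-triangle numbers.
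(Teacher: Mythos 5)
Your proposal follows essentially the same route as the paper's own proof: reduce via the architecture of Theorem~\ref{thm:3-9} (geodesic diagrams so $\Omega_1$, $\Omega_2$ do not occur; torus twists and surface isotopies treated as unit-cell changes) to the analysis of single $\Omega_3$ moves, then apply the same trichotomy---move disjoint from every A-triangle (count unchanged), slid thread coming to rest inside an A-triangle (count drops), or thread crossing the complete A-triangle (count preserved). If anything, your write-up is more careful than the paper's, which stops at the single-move case analysis and never addresses the non-cancellation issue you explicitly flag for the converse direction.
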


\begin{proof}
The proof is almost similar to the one of Theorem 3.9. The difference concerns the Reidemeister move $\Omega_3$.
We denote by $K$ the number of A-triangles in $D_{W_1}$. 
If an $\Omega_3$ move happens at a crossing that does not belong to a A-triangle, then $K$ is obviously not affected.
However, if it occurs at a crossing belonging to a $A-triangle$, there are two possibilities.
First, the thread lies in the interior of the A-triangle after the $\Omega_3$ move. 
It generates a new triangle which is not alternating, meaning that $K$ decreases, which is a contradiction.
Second possibility, the thread crosses the A-triangle after the $\Omega_3$ move, meaning that it is in the exterior of the A-triangle. 
In this case, $K$ remains unchanged.
\end{proof}

%%%%%%%%%%%%%%%%%%%%%%%%%%%%%%%%
\subsection{Examples of Classification tables}\label{sec:3-3}

~

~

The purpose of this section is to apply our new results to build and classify some simple square and kagome weaving diagrams by hand, 
which could be extended to more complex structures with a computer program.

%%%%%%%%%%%%%%%%%%%%%%%%%%%%%% FIGURE 10 %%%%%%%%%%%%%%%%%%%%%%%%%%%%%%
\begin{figure}[H]
\centering
   \includegraphics[scale=0.45]{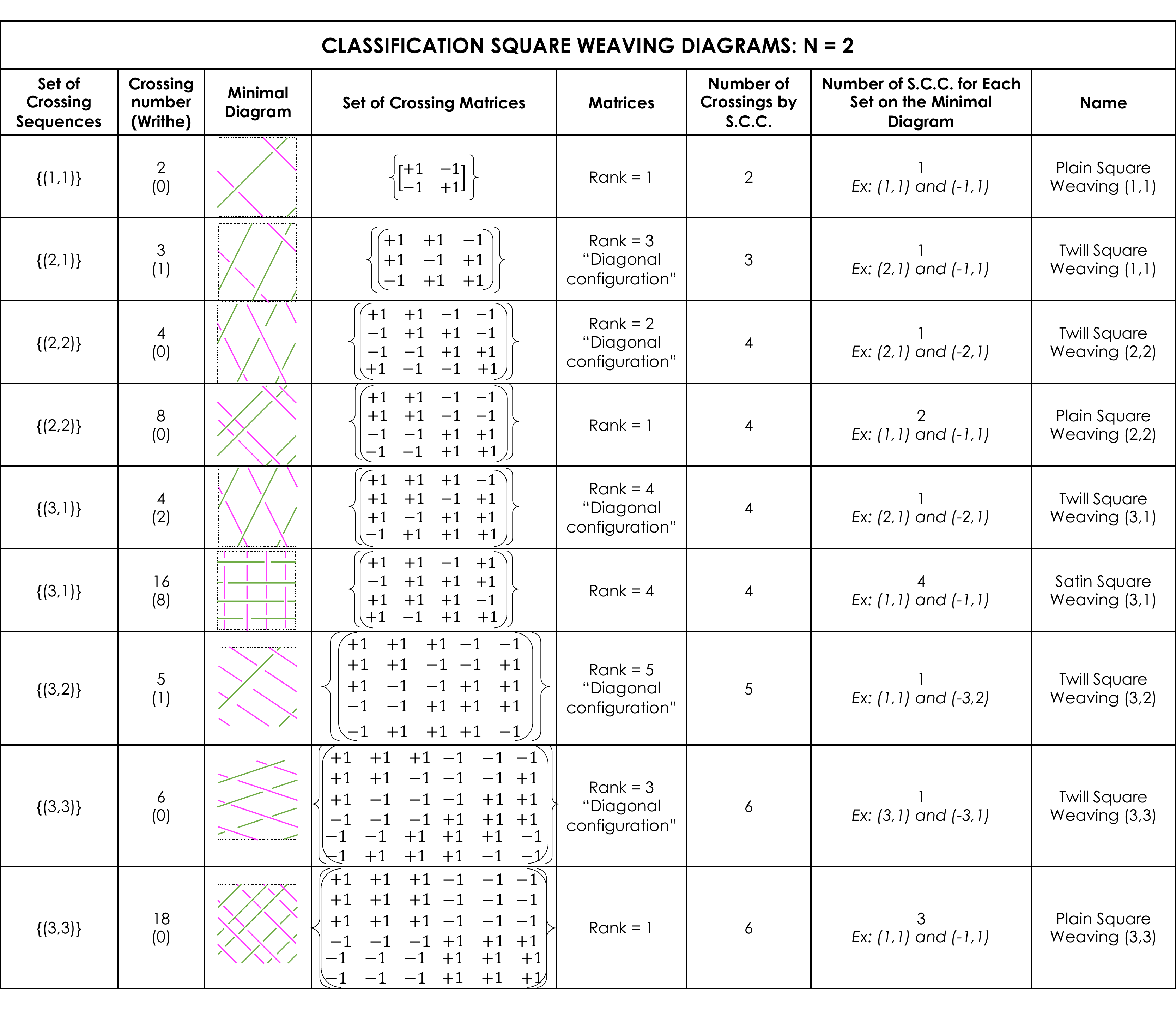}
      \caption{\label{fig10} Table Classification Square Weavings 1}
\end{figure}
%%%%%%%%%%%%%%%%%%%%%%%%%%%%%% FIGURE 10 %%%%%%%%%%%%%%%%%%%%%%%%%%%%%%

%%%%%%%%%%%%%%%%%%%%%%%%%%%%%% FIGURE 11 %%%%%%%%%%%%%%%%%%%%%%%%%%%%%%
\begin{figure}[H]
\centering
   \includegraphics[scale=0.45]{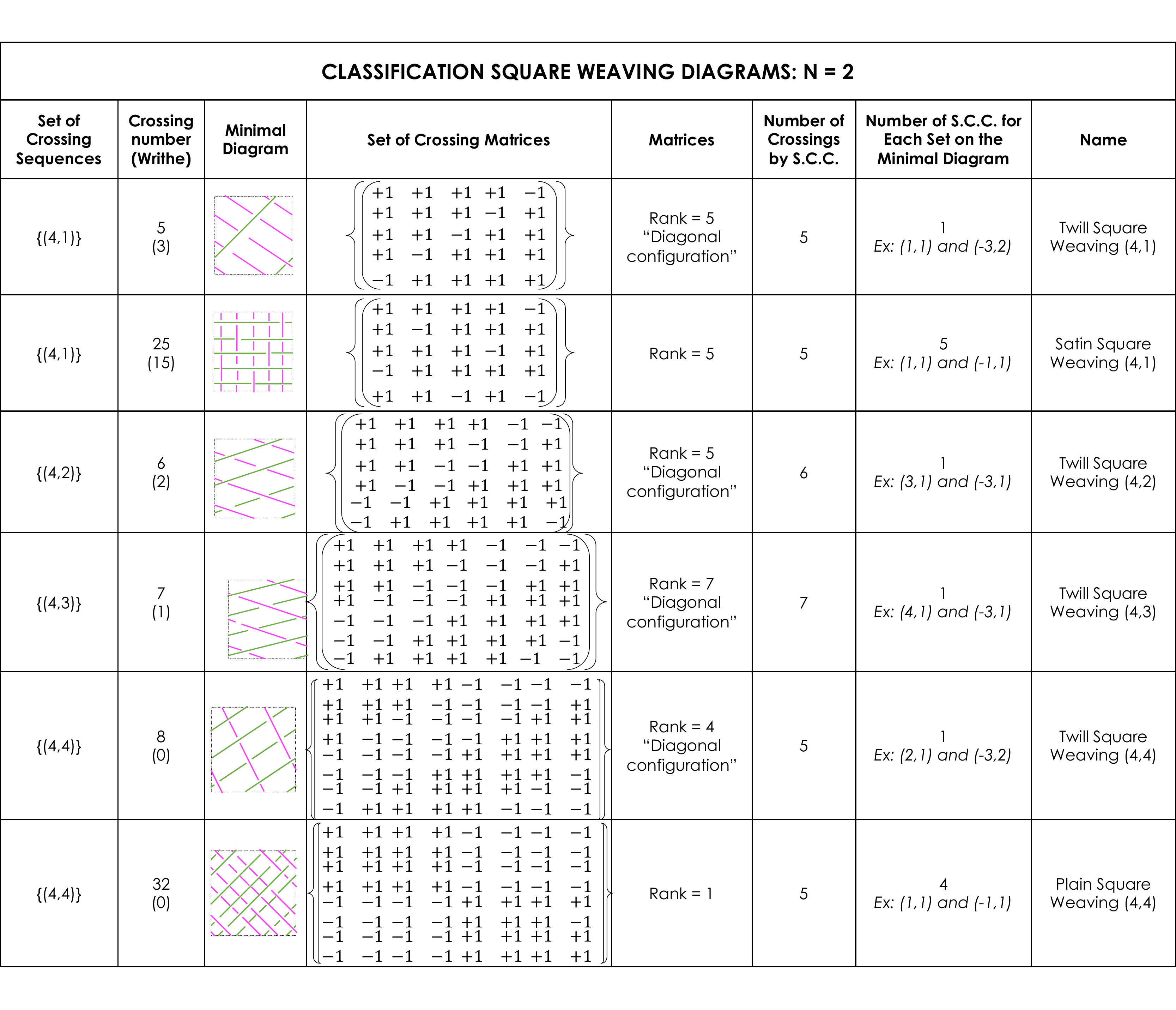}
      \caption{\label{fig11} Table Classification Square Weavings 2}
\end{figure}
%%%%%%%%%%%%%%%%%%%%%%%%%%%%%% FIGURE 11 %%%%%%%%%%%%%%%%%%%%%%%%%%%%%%

%%%%%%%%%%%%%%%%%%%%%%%%%%%%%% FIGURE 12 %%%%%%%%%%%%%%%%%%%%%%%%%%%%%%
\begin{figure}[H]
\centering
   \includegraphics[scale=0.47]{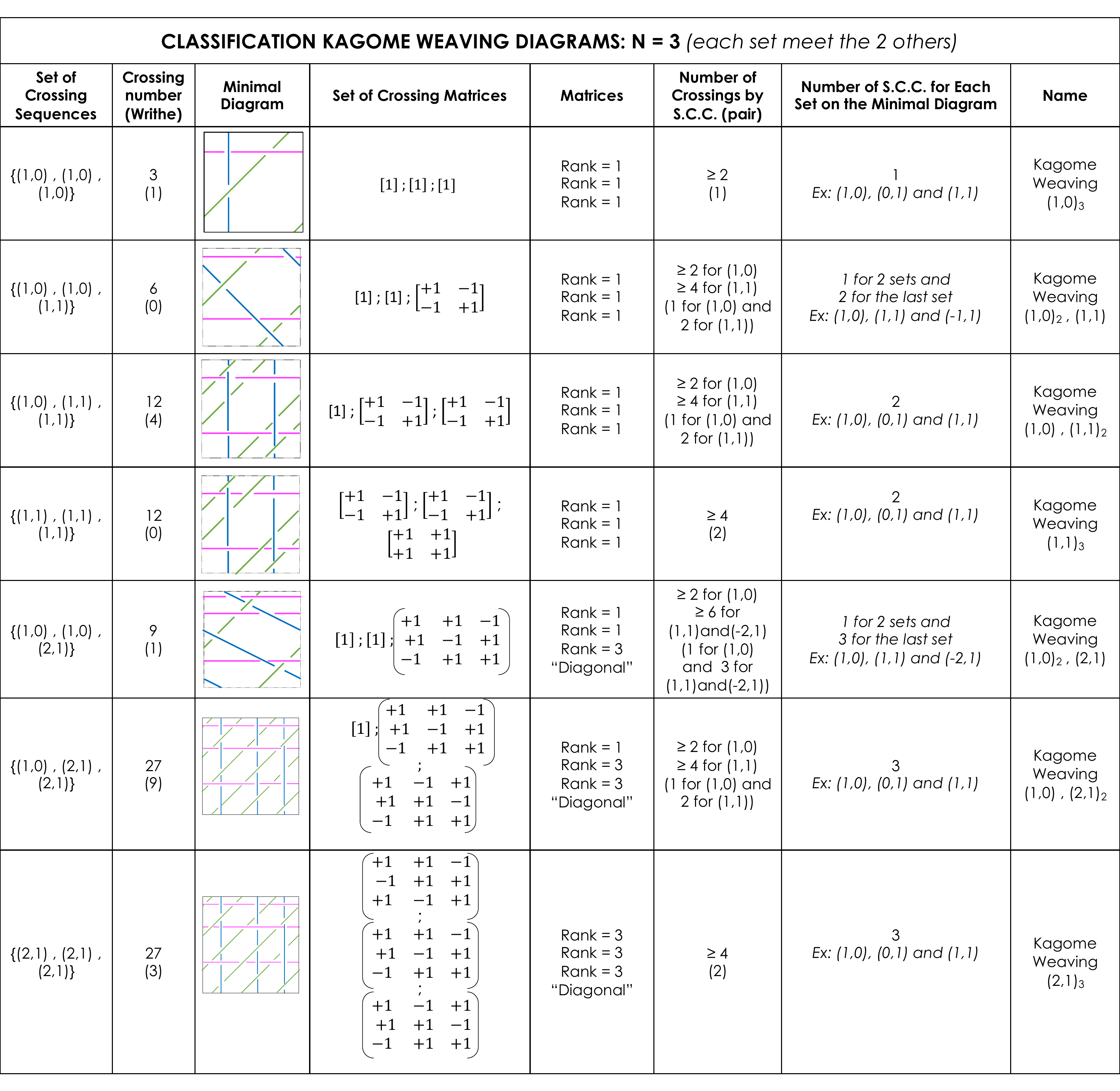}
      \caption{\label{fig12} Table Classification Kagome Weavings}
\end{figure}
%%%%%%%%%%%%%%%%%%%%%%%%%%%%%% FIGURE 12 %%%%%%%%%%%%%%%%%%%%%%%%%%%%%%

%%%%%%%%%%%%%%%%%%%%%%%%%%%%%%
%%%%%%%%%%%%%%%%%%%%%%%%%%%%%%
%%%%%%%%%%%%%%%%%%%%%%%%%%%%%%
\ ~

\ ~

\section{Acknoledgement}\label{sec:4}
We would like to thank A. Cheritat (IMT Toulouse), T. Kechadi (UCD Berlin), as well as M. Evans and her group (U. Potsdam and T.U. Berlin) for their precious comments and advice during this study. 
This work is supported by a Research Fellowship from JST CREST Grant Number JPMJCR17J4.\\

%%%%%%%%%%%%%%%%%%%%%%%%%%%%%%
%%%%%%%%%%%%%%%%%%%%%%%%%%%%%%
%%%%%%%%%%%%%%%%%%%%%%%%%%%%%%

%%%THE END %%%

%%%%%%%%%%%%%%%%%%%%%%%%%%%%%%%%
%%%%%%%%%%%%%%%%%%%%%%%%%%%%%%%%
%%%%%%%%%%%%%%%%%%%%%%%%%%%%%%%%

\end{document}